\theoremstyle{plain}
\newtheorem{theorem}{Theorem}[section]
\newtheorem{lemma}{Lemma}[section]
\newtheorem{proposition}{Proposition}[section]
\newtheorem{corollary}{Corollary}[section]
\theoremstyle{definition}
\newtheorem{example}{Example}[section]
\newtheorem{remark}{Remark}
\numberwithin{equation}{section}
\begin{document}

\title[]
{On absolutely representing families of subspaces in Banach spaces}

\author{Ivan S. Feshchenko}

\maketitle

\begin{abstract}
Let $X$ be a Banach space, and $X_\lambda$, $\lambda\in\Lambda$, be a family of subspaces of $X$.
This family is called an absolutely representing family of subspaces in $X$ if for every $x\in X$ there exist
$x_\lambda\in X_\lambda$, $\lambda\in\Lambda$, such that $\sum_{\lambda\in\Lambda}\|x_\lambda\|<\infty$ and $x=\sum_{\lambda\in\Lambda}x_\lambda$.

In this paper we obtain necessary and (or) sufficient conditions for a family of subspaces to be an absolutely representing family of subspaces and
study properties of absolutely representing families of subspaces in Banach spaces.

As an example, we study families of subspaces spanned by $e^{-\alpha t}$ in the space $C_\infty[0,\infty)$.

\textbf{Keywords:} Banach space, absolutely representing family of subspaces.
\end{abstract}

\section{Introduction}

Let $X$ be a Banach space over a field $\mathbb{K}$ of real or complex numbers, and $\Lambda$ be a nonempty set.
Suppose that $X_\lambda$ is a subspace of $X$ (by a subspace we mean a closed lineal), $\lambda\in\Lambda$.
We denote this family of subspaces by $S=(X;X_\lambda\mid\lambda\in\Lambda)$.
The family $S$ is called an \textsl{absolutely representing family of subspaces} (ARFS) in $X$
if for every $x\in X$ there exist $x_\lambda\in X_\lambda$, $\lambda\in\Lambda$, such that
$\sum_{\lambda\in\Lambda}\|x_\lambda\|<\infty$ and $x=\sum_{\lambda\in\Lambda}x_\lambda$.

If $\Lambda$ is countable, then the definition of an ARFS coincides with the definition of
an \textsl{absolutely representing system of subspaces} (ARSS) in $X$ (see, for example, \cite{KorRSS}).

The notion of an ARFS is closely related to the notion of an \textsl{absolutely representing family} (ARF) in $X$ (see, for example, \cite{Kor_ARF}).
Let us recall the corresponding definition.
Let $\Omega$ be a nonempty set.
Suppose that $v_\omega$ is a nonzero element of $X$, $\omega\in\Omega$.
We denote this family of elements by $V=(X;v_\omega\mid\omega\in\Omega)$.
The family $V$ is called an ARF in $X$ if for every $x\in X$ there exist $c_\omega\in\mathbb{K}$, $\omega\in\Omega$,
such that $\sum_{\omega\in\Omega}\|c_\omega v_\omega\|<\infty$ and $x=\sum_{\omega\in\Omega}c_\omega v_\omega$.
Let us show a connection between absolutely representing families of subspaces and absolutely representing systems (of elements).
For a subspace $Y\subset X$, define $\mathbf{B}_Y=\{y\in Y\mid \|y\|=1\}$.
Clearly, $S=(X;X_\lambda\mid\lambda\in\Lambda)$ is an ARFS in $X$ iff the family
\begin{equation*}
V(S)=(X;\bigcup_{\lambda\in\Lambda}\mathbf{B}_{X_\lambda})
\end{equation*}
is an ARF in $X$.
For a nonzero element $v\in X$, denote by $\langle v\rangle$ the one-dimensional subspace spanned by $v$.
Clearly, $V=(X;v_\omega\mid\omega\in\Omega)$ is an ARF in $X$ iff the family of one-dimensional subspaces
\begin{equation*}
S(V)=(X;\langle v_\omega\rangle\mid\omega\in\Omega)
\end{equation*}
is an ARFS in $X$.

One can give the definitions of an ARSS and an ARF for a wider class of spaces than the Banach spaces, e.g., for
the complete Hausdorff locally convex spaces.
ARSS and ARF in various classes of spaces were studied in numerous publications
and have many applications, e.g., in complex analysis
(the problem of representation of the functions analytic in a domain $D\subset\mathbb{C}$ by series of exponents)
(see, for instance, \cite{Kor_ARF},\cite{KorRSS},\cite{Korobeinik_09},\cite{Abanin_95},\cite{Abanin_00},\cite{Abanin_06},\cite{Mihaylov}).

ARF in Banach spaces were studied in~\cite[p. 209-210]{Banach}, \cite{Vershynyn_1}, \cite{Vershynin_2};
ARF in Hilbert spaces were studied in~\cite{Shraifel_1}, \cite{Shraifel_2}.

In this paper we study ARFS in Banach spaces.

This paper is organized as follows.

In Section~\ref{S:Aux_results}, we provide auxiliary notions and results which will be useful in the next sections.

In Section~\ref{S:construction}, we provide some natural examples of ARFS.
We also give a method of construction of ARFS.

In Section~\ref{S:criterion_stability}, we obtain a criterion for a family of subspaces to be an ARFS in $X$.
Using this criterion, we prove the stability of ARFS.
More precisely, if $S$ is an ARFS in $X$, and a family of subspaces $\widetilde{S}=(X,\widetilde{X}_\lambda\mid\lambda\in\Lambda)$
is such that for every $\lambda\in\Lambda$ the subspace $X_\lambda$ is sufficiently close to $\widetilde{X}_\lambda$,
then $\widetilde{S}$ is also an ARFS in $X$.
As a measure of closeness of a subspace $Y$ to a subspace $Z$ we consider the quantity
\begin{equation}\label{E:rho_0}
\rho_0(Y,Z)=\sup_{y\in\mathbf{B}_Y}d(y,Z),
\end{equation}
where $d(x,F)$ is the distance from $x\in X$ to a set $F\subset X$ (if $Y=0$, then we set $\rho_0(Y,Z)=0$).

In Section~\ref{S:necessary_condition}, we obtain a necessary condition for a family of subspaces to be an ARFS in $X$.

In Section~\ref{S:sufficient_conditions_appl}, we get sufficient conditions for a family of subspaces to be an ARFS in $X$.
Using this results, we obtain sufficient conditions for a subfamily of an ARFS in $X$ to be an ARFS in $X$.

In Section~\ref{S:exponents}, we study families of subspaces spanned by $e^{-\alpha t}$ in $C_\infty[0,\infty)$.
Let us formulate the main problem studied in this section.
Denote by $C_\infty[0,\infty)$ the set of all continuous functions $f:[0,\infty)\to\mathbb{K}$ such that $\lim_{t\to\infty}f(t)=0$.
Set $\|f\|=\sup_{t\in[0,\infty)}|f(t)|$, $f\in C_\infty[0,\infty)$.
Define $I_n=\{1,\ldots,n\}$, $n\in\mathbb{N}$.
We also set $I_\infty=\{1,2,\ldots\}$.
Let $\Lambda$ be a nonempty set.
Suppose that $n(\lambda)\in\mathbb{N}\cup\{\infty\}$, $\lambda\in\Lambda$.
Let $\alpha(\lambda,k)$, $k\in I_{n(\lambda)}$, be pairwise distinct positive numbers, $\lambda\in\Lambda$.
Define $X_\lambda$ to be the subspace of spanned by $e^{-\alpha(\lambda,k)t}$, $k\in I_{n(\lambda)}$.

In Section~\ref{S:exponents} we study the following question:
\begin{center}
when the system of subspaces $X_\lambda$, $\lambda\in\Lambda$, is an ARFS in $C_\infty[0,\infty)$?
\end{center}
The answer depends on the family of numbers $\beta(\lambda)=\sum_{k\in I_{n(\lambda)}}1/\alpha(\lambda,k)$, $\lambda\in\Lambda$
(see Section~\ref{SS:exponents_results}).

\section{Auxiliary notions and results.
Spaces $\ell_1(Z_\lambda\mid\lambda\in\Lambda)$ and $\ell_\infty(Z_\lambda\mid\lambda\in\Lambda)$}\label{S:Aux_results}

Let $Z_\lambda$, $\lambda\in\Lambda$, be a family of Banach spaces.

Define $\ell_1(Z_\lambda\mid\lambda\in\Lambda)$ to be the linear space of elements
$\xi=(z_\lambda\mid\lambda\in\Lambda)$ such that $\sum_{\lambda\in\Lambda}\|z_\lambda\|<\infty$, endowed
with the norm $\|\xi\|_1=\sum_{\lambda\in\Lambda}\|z_\lambda\|$. It is easy to check that
$\ell_1(Z_\lambda\mid\lambda\in\Lambda)$ is a Banach space.

Define $\ell_\infty(Z_\lambda\mid\lambda\in\Lambda)$ to be the linear space of elements
$\xi=(z_\lambda\mid\lambda\in\Lambda)$ such that $\sup_{\lambda\in\Lambda}\|z_\lambda\|<\infty$, endowed
with the norm $\|\xi\|_\infty=\sup_{\lambda\in\Lambda}\|z_\lambda\|$. It is easy to check that
$\ell_\infty(Z_\lambda\mid\lambda\in\Lambda)$ is a Banach space.

For a Banach space $Z$ denote by $Z^*$ the linear space of all continuous linear mappings
$\varphi:Z\to\mathbb{K}$, endowed with the norm $\|\varphi\|=\sup_{z\in\mathbf{B}_Z}|\varphi(z)|$.

In the sequel, we will use the following relation between the spaces
$\ell_1(Z_\lambda\mid\lambda\in\Lambda)$ and $\ell_\infty(Z_\lambda\mid\lambda\in\Lambda)$:
\begin{equation*}
(\ell_1(Z_\lambda\mid\lambda\in\Lambda))^*=\ell_\infty(Z_\lambda^*\mid\lambda\in\Lambda).
\end{equation*}
Note that
\begin{equation*}
\xi^*(\xi)=\sum_{\lambda\in\Lambda}z_\lambda^*(z_\lambda)
\end{equation*}
for $\xi^*=(z_\lambda^*\mid\lambda\in\Lambda)\in\ell_\infty(Z_\lambda^*\mid\lambda\in\Lambda)$ and
$\xi=(z_\lambda\mid\lambda\in\Lambda)\in\ell_1(Z_\lambda\mid\lambda\in\Lambda)$.

\section{Natural examples of ARFS and a construction of ARFS}\label{S:construction}

\subsection{Natural examples of ARFS}

\begin{example}
Let $Z_\lambda$, $\lambda\in\Lambda$, be a family of Banach spaces.
Set $X=\ell_1(Z_\lambda\mid\lambda\in\Lambda)$.
Let $\Gamma_\lambda:Z_\lambda\to X$ be the natural embedding of $Z_\lambda$ into $X$.
Define $X_\lambda=\Gamma_\lambda Z_\lambda$, $\lambda\in\Lambda$.
Clearly, the family $X_\lambda$, $\lambda\in\Lambda$, is an ARFS in $X$.
\end{example}

\begin{example}
Let $(T,\mathcal{F},\mu)$ be a measure space.
Set $X=L_1(T,\mathcal{F},\mu)$, and let $\|\cdot\|=\|\cdot\|_1$, where $\|x(t)\|_1=\int_T|x(t)|\,dt$.
For $A\in\mathcal{F}$, define $X_A$ to be the set of $x(t)\in X$ such that $x(t)=0$ a.e. on $T\setminus A$.
Clearly, $X_A$ is a subspace of $X$.

Suppose sets $T_k\in\mathcal{F}$, $k\geqslant 1$, satisfy $\bigcup_{k=1}^\infty T_k=T$.
Then the family of subspaces $X_k=X_{T_k}$, $k\geqslant 1$, is an ARFS in $X$.

To prove this, define the sets $\widetilde{T}_k$, $k\geqslant 1$, by
$\widetilde{T}_1=T_1$, $\widetilde{T}_k=T_k\setminus(T_1\cup\ldots\cup T_{k-1})$ for $k\geqslant 2$.
Then $\widetilde{T}_k\subset T_k$, $\bigcup_{k=1}^\infty\widetilde{T}_k=T$, and $\widetilde{T}_i\cap\widetilde{T}_j=\varnothing$ for $i\neq j$.
Take any $x\in X$.
Define $x_k=x\mathbb{I}_{\widetilde{T}_k}\in X_k$, $k\geqslant 1$, where $\mathbb{I}_A$ is the indicator function of a set $A\in\mathcal{F}$.
It is easily seen that $x=\sum_{k=1}^\infty x_k$ (the series converges in $\|\cdot\|$).
Since $\|x_k\|=\int_{\widetilde{T}_k}|x_k|\,dt$, we conclude that $\sum_{k=1}^\infty\|x_k\|=\|x\|$.
Hence, $X_k$, $k\geqslant 1$, is an ARFS in $X$.
\end{example}

\begin{example}\label{EX:dense}
Let $X$ be a Banach space, and $X_\lambda$, $\lambda\in\Lambda$, be a family of subspaces of $X$.
Suppose $\bigcup_{\lambda\in\Lambda}X_\lambda$ is dense in $X$.

We claim that $X_\lambda$, $\lambda\in\Lambda$, is an ARFS in $X$.
Moreover, for any $x\in X$ and $\varepsilon>0$ there exists a family $x_\lambda\in X_\lambda$, $\lambda\in\Lambda$, such that
$\sum_{\lambda\in\Lambda}\|x_\lambda\|\leqslant\|x\|+\varepsilon$ and $x=\sum_{\lambda\in\Lambda}x_\lambda$.

Let us prove this.
Fix $x\in X$ and $\varepsilon>0$.
There exist $\lambda(1)\in\Lambda$ and $z_1\in X_{\lambda(1)}$ such that $\|x-z_1\|<\varepsilon/2^2$.
Suppose $z_1,\ldots,z_m$ have already been defined.
There exist $\lambda(m+1)\in\Lambda$ and $z_{m+1}\in X_{\lambda(m+1)}$ such that $\|(x-\sum_{k=1}^{m}z_k)-z_{m+1}\|<\varepsilon/2^{(m+2)}$.

We thus obtain the sequences $\lambda(m)\in\Lambda$, $m\geqslant 1$, and $z_m\in X_{\lambda(m)}$, $m\geqslant 1$.
By the construction of this sequences, we have $\|x-z_1-\ldots-z_m\|<\varepsilon/2^{(m+1)}$ for $m\geqslant 1$.
Hence, $x=\sum_{m=1}^{\infty}z_m$.
Moreover, $\|z_1\|<\|x\|+\varepsilon/2^2$ and $\|z_m\|<\varepsilon/2^{(m+1)}+\varepsilon/2^m$ for $m\geqslant 2$.
It follows that $\sum_{m=1}^{\infty}\|z_m\|<\|x\|+\varepsilon$.

Define $x_\lambda=\sum_{m:\lambda(m)=\lambda}z_m$ (the empty sum is defined to be $0$), $\lambda\in\Lambda$.
Obviously, $x_\lambda\in X_\lambda$, $\lambda\in\Lambda$.
We have $x=\sum_{\lambda\in\Lambda}x_\lambda$ and $\sum_{\lambda\in\Lambda}\|x_\lambda\|\leqslant\sum_{m=1}^{\infty}\|z_m\|<\|x\|+\varepsilon$.
\end{example}

\subsection{A construction of ARFS}

For a subset $M$ of a Banach space $Y$, define $\overline{M}$ to be the closure of $M$.

The following obvious proposition shows that if a family of subspaces $S$ is an ARFS in $X$, then
every surjective operator $A:X\to Y$ generates in a natural way an ARFS in $Y$.

\begin{proposition}
Let $X,Y$ be Banach spaces, $A:X\to Y$ a continuous linear operator with $\mathrm{Im}(A)=Y$.
If $S=(X;X_\lambda\mid\lambda\in\Lambda)$ is an ARFS in $X$, then the family of subspaces
$A(S)=(Y;\overline{A(X_\lambda)}\mid\lambda\in\Lambda)$ is an ARFS in $Y$.
\end{proposition}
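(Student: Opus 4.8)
The plan is to exploit surjectivity of $A$ to lift an arbitrary target vector back to $X$, decompose it there using the hypothesis that $S$ is an ARFS, and then push the decomposition forward through $A$. No heavy machinery (such as the open mapping theorem) is needed: surjectivity alone supplies a preimage, and continuity of $A$ does the rest. This is why the proposition is flagged as obvious.

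Concretely, I would fix an arbitrary $y\in Y$. Since $\mathrm{Im}(A)=Y$, there is some $x\in X$ with $Ax=y$. Because $S=(X;X_\lambda\mid\lambda\in\Lambda)$ is an ARFS in $X$, I can choose $x_\lambda\in X_\lambda$, $\lambda\in\Lambda$, with $\sum_{\lambda\in\Lambda}\|x_\lambda\|<\infty$ and $x=\sum_{\lambda\in\Lambda}x_\lambda$. I then set $y_\lambda=Ax_\lambda$, noting that $y_\lambda\in A(X_\lambda)\subseteq\overline{A(X_\lambda)}$, so each $y_\lambda$ lies in the prescribed subspace of the family $A(S)$.

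Two properties must be verified. First, absolute summability transfers: boundedness of $A$ gives $\|y_\lambda\|=\|Ax_\lambda\|\leqslant\|A\|\,\|x_\lambda\|$, whence $\sum_{\lambda\in\Lambda}\|y_\lambda\|\leqslant\|A\|\sum_{\lambda\in\Lambda}\|x_\lambda\|<\infty$. Second, the sum reproduces $y$: since $\sum_{\lambda\in\Lambda}\|x_\lambda\|<\infty$, the family $(x_\lambda)$ has countable support and the series converges unconditionally in $X$, so continuity of $A$ lets me interchange $A$ with the summation, giving $y=Ax=A\bigl(\sum_{\lambda\in\Lambda}x_\lambda\bigr)=\sum_{\lambda\in\Lambda}Ax_\lambda=\sum_{\lambda\in\Lambda}y_\lambda$. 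As $y\in Y$ was arbitrary, this exhibits $A(S)$ as an ARFS in $Y$.

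The only point requiring any care—and it is minor rather than a genuine obstacle—is the commutation of $A$ with the (possibly uncountable) sum. This is justified purely by absolute convergence, which forces countably many nonzero terms and norm-convergence of the resulting series, together with the continuity of the bounded linear operator $A$; no openness or quantitative lower bound on $A$ enters. I would remark in passing that the images $y_\lambda$ in fact land in $A(X_\lambda)$ itself, the closure appearing in the definition of $A(S)$ only to guarantee that each member of the family is a genuine subspace.
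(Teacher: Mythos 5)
Your proof is correct, and it is exactly the direct verification the paper has in mind: the paper states this proposition without proof, calling it obvious, and your argument (lift $y$ by surjectivity, decompose the preimage using the ARFS property, push forward by continuity of $A$) is the intended one. Your closing remarks on the countable support of the summable family and on the role of the closure in $\overline{A(X_\lambda)}$ are both accurate and handle the only points where any care is needed.
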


\section{Criterion for $S$ to be an ARFS in $X$ and its applications}\label{S:criterion_stability}

Let $X$ be a Banach space, and $S=(X;X_\lambda\mid\lambda\in\Lambda)$ be a family of subspaces of $X$.

\subsection{Criterion for $S$ to be an ARFS in $X$}\label{SS:criterion}

The following criterion for $S$ to be an ARFS in $X$ generalizes the well-known criteria
for a family of vectors to be an ARF in $X$ (see, e.g., \cite[Theorem 1]{Korobeinik_09}, \cite[Theorem 2.1, (iv)]{Vershynin_2}),
and for a system of subspaces to be an ARSS in $X$ (\cite[Theorem 1]{Abanin_00}).

For a mapping $f:X\to\mathbb{K}$ and a set $Y\subset X$, denote by $f\upharpoonright_Y$ the restriction of $f$ to $Y$.

\begin{theorem}\label{T:criterion_ARFS}
$S$ is an ARFS in $X$ if and only if there exists an $\varepsilon>0$ such that
\begin{equation}\label{E:ARFS}
\sup_{\lambda\in\Lambda}\|\varphi\upharpoonright_{X_\lambda}\|\geqslant\varepsilon\|\varphi\|
\end{equation}
for any $\varphi\in X^*$.
\end{theorem}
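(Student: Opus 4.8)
The plan is to recast the ARFS property as surjectivity of a single ``summation'' operator and then apply the standard duality between surjectivity of an operator and lower boundedness of its adjoint. Concretely, I would form the Banach space $E=\ell_1(X_\lambda\mid\lambda\in\Lambda)$ from Section~\ref{S:Aux_results} and define the summation operator $\sigma:E\to X$ by $\sigma(\xi)=\sum_{\lambda\in\Lambda}x_\lambda$ for $\xi=(x_\lambda\mid\lambda\in\Lambda)$. Since $X$ is complete and $\sum_{\lambda}\|x_\lambda\|<\infty$, the series converges absolutely, so $\sigma$ is well defined, linear, and bounded with $\|\sigma\xi\|\leqslant\|\xi\|_1$. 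The point is that $S$ being an ARFS in $X$ means exactly that every $x\in X$ can be written as $\sum_{\lambda}x_\lambda$ with $x_\lambda\in X_\lambda$ and $\sum_{\lambda}\|x_\lambda\|<\infty$; that is, exactly that $\sigma$ is surjective.

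Next I would identify the adjoint $\sigma^*:X^*\to E^*$. Using $E^*=\ell_\infty(X_\lambda^*\mid\lambda\in\Lambda)$ together with the pairing formula recorded in Section~\ref{S:Aux_results}, for $\varphi\in X^*$ and $\xi=(x_\lambda\mid\lambda\in\Lambda)\in E$ we have $(\sigma^*\varphi)(\xi)=\varphi(\sigma\xi)=\sum_{\lambda}\varphi(x_\lambda)=\sum_{\lambda}(\varphi\upharpoonright_{X_\lambda})(x_\lambda)$. Since $\varphi\upharpoonright_{X_\lambda}\in X_\lambda^*$ with $\|\varphi\upharpoonright_{X_\lambda}\|\leqslant\|\varphi\|$, comparison with the pairing (checked first on finitely supported $\xi$ and extended by density) forces $\sigma^*\varphi=(\varphi\upharpoonright_{X_\lambda}\mid\lambda\in\Lambda)$, whence $\|\sigma^*\varphi\|_\infty=\sup_{\lambda\in\Lambda}\|\varphi\upharpoonright_{X_\lambda}\|$. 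Thus inequality~\eqref{E:ARFS} is literally the assertion that $\sigma^*$ is bounded below, i.e.\ $\|\sigma^*\varphi\|\geqslant\varepsilon\|\varphi\|$ for all $\varphi\in X^*$.

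It then remains to prove the functional-analytic fact that a bounded operator $\sigma:E\to X$ between Banach spaces is surjective if and only if $\sigma^*$ is bounded below. The forward direction is easy: if $\sigma$ is surjective, the open mapping theorem gives $\delta>0$ with $\{\,y:\|y\|<\delta\,\}\subseteq\sigma(\{\,\xi:\|\xi\|_1\leqslant 1\,\})$, so $\|\sigma^*\varphi\|=\sup_{\|\xi\|_1\leqslant 1}|\varphi(\sigma\xi)|\geqslant\sup_{\|y\|<\delta}|\varphi(y)|=\delta\|\varphi\|$. The converse is the main obstacle: from the lower bound I must manufacture exact preimages. I expect to set $C=\overline{\{\sigma\xi:\|\xi\|_1\leqslant 1\}}$, a closed convex balanced set, and show $\{\,y:\|y\|<\varepsilon\,\}\subseteq C$; indeed, were some such $y_0$ outside $C$, Hahn--Banach separation would yield $\varphi\in X^*$ with $\|\sigma^*\varphi\|=\sup_{\|\xi\|_1\leqslant 1}|\varphi(\sigma\xi)|\leqslant|\varphi(y_0)|<\varepsilon\|\varphi\|$, contradicting~\eqref{E:ARFS}. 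Finally I would upgrade this approximate solvability to exact solvability by the usual successive-correction argument, available because $E$ is complete: given $x$, iteratively choose $\xi_n$ of controlled norm so that the residuals $\|x-\sigma(\xi_1+\cdots+\xi_n)\|$ shrink geometrically, and let $\xi=\sum_n\xi_n$; then $\sigma\xi=x$, which both proves surjectivity and, by tracking $\sum_n\|\xi_n\|_1$, exhibits the absolutely convergent representation and completes the equivalence.
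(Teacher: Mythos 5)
Your proposal is correct and follows essentially the same route as the paper: recast the ARFS property as surjectivity of the summation operator $A:\ell_1(X_\lambda\mid\lambda\in\Lambda)\to X$, identify $A^*\varphi=(\varphi\upharpoonright_{X_\lambda}\mid\lambda\in\Lambda)$, and invoke the duality ``$A$ surjective $\Leftrightarrow$ $A^*$ bounded below.'' The only difference is that the paper cites this duality as a known fact, while you supply its standard proof (open mapping theorem one way, Hahn--Banach separation plus successive approximation the other), which is a correct and welcome filling-in of detail.
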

\begin{proof}
Consider the continuous linear operator $A:\ell_1(X_\lambda\mid\lambda\in\Lambda)\to X$ defined by $A(x_\lambda\mid\lambda\in\Lambda)=\sum_{\lambda\in\Lambda}x_\lambda$.
Then $A^*:X^*\to\ell_\infty(X_\lambda^*\mid\lambda\in\Lambda)$.
It is easy to check that $A^*\varphi=(\varphi\upharpoonright_{X_\lambda}\mid\lambda\in\Lambda)$.
$S$ is an ARFS in $X$ $\Leftrightarrow$
$\mathrm{Im}(A)=X$ $\Leftrightarrow$
$A^*$ is an isomorphic embedding, that is, there exists an $\varepsilon>0$ such that
$\|A^*\varphi\|_\infty\geqslant\varepsilon\|\varphi\|$, $\varphi\in X^*$.
This completes the proof.
\end{proof}

Let us formulate a criterion for a family of subspaces to be an ARFS in geometric terms.
Recall that as a measure of closeness of a subspace $Y$ to a subspace $Z$ we consider the quantity $\rho_0(Y,Z)$ defined by~\eqref{E:rho_0}.

\begin{theorem}\label{T:criterion_ARFS_geom}
$S$ is an ARFS in $X$ if and only if there exists an $\varepsilon>0$ such that
\begin{equation}\label{E:ARFS_geom}
\sup_{\lambda\in\Lambda}\rho_0(X_\lambda,Y)\geqslant\varepsilon
\end{equation}
for any subspace $Y\subset X$, $Y\neq X$.
\end{theorem}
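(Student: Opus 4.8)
The plan is to deduce this geometric criterion directly from the functional criterion of Theorem~\ref{T:criterion_ARFS} by translating the quantity $\rho_0(X_\lambda,Y)$ into the language of functionals. The bridge is the Hahn--Banach duality formula for the distance to a closed proper subspace $Y\subsetneq X$: for every $x\in X$,
\begin{equation*}
d(x,Y)=\sup\{|\varphi(x)|\mid \varphi\in X^*,\ \|\varphi\|\leqslant 1,\ \varphi\upharpoonright_Y=0\}.
\end{equation*}
Write $Y^{\perp}=\{\varphi\in X^*\mid\varphi\upharpoonright_Y=0\}$ and let $\mathbf{B}_{Y^\perp}$ denote its unit sphere. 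Since $|\varphi(x)|$ is positively homogeneous in $\varphi$ and $Y^\perp\neq 0$ (as $Y\neq X$), the supremum above is the same over the ball and over the sphere. Taking the supremum over $x\in\mathbf{B}_{X_\lambda}$, interchanging the two suprema, and using $\sup_{x\in\mathbf{B}_{X_\lambda}}|\varphi(x)|=\|\varphi\upharpoonright_{X_\lambda}\|$, I obtain the key identity
\begin{equation*}
\rho_0(X_\lambda,Y)=\sup_{x\in\mathbf{B}_{X_\lambda}}d(x,Y)=\sup_{\varphi\in\mathbf{B}_{Y^\perp}}\|\varphi\upharpoonright_{X_\lambda}\|,
\end{equation*}
and, taking $\sup_\lambda$ and swapping suprema once more,
\begin{equation*}
\sup_{\lambda\in\Lambda}\rho_0(X_\lambda,Y)=\sup_{\varphi\in\mathbf{B}_{Y^\perp}}\ \sup_{\lambda\in\Lambda}\|\varphi\upharpoonright_{X_\lambda}\|.
\end{equation*}

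To prove the forward implication I would assume $S$ is an ARFS. By Theorem~\ref{T:criterion_ARFS} there is an $\varepsilon>0$ with $\sup_\lambda\|\varphi\upharpoonright_{X_\lambda}\|\geqslant\varepsilon\|\varphi\|$ for all $\varphi\in X^*$. Fix a subspace $Y\subsetneq X$. Since $Y$ is closed and proper, $\mathbf{B}_{Y^\perp}\neq\varnothing$ by Hahn--Banach; choosing any $\varphi\in\mathbf{B}_{Y^\perp}$ and invoking the last identity gives $\sup_\lambda\rho_0(X_\lambda,Y)\geqslant\sup_\lambda\|\varphi\upharpoonright_{X_\lambda}\|\geqslant\varepsilon\|\varphi\|=\varepsilon$. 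Thus~\eqref{E:ARFS_geom} holds with the same $\varepsilon$.

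For the converse I would assume~\eqref{E:ARFS_geom} for some $\varepsilon>0$ and all proper subspaces. Given a nonzero $\varphi\in X^*$, apply~\eqref{E:ARFS_geom} to the hyperplane $Y=\ker\varphi$, which is closed and proper. Its annihilator is the one-dimensional space $Y^\perp=\langle\varphi\rangle$, so the key identity collapses to $\sup_\lambda\rho_0(X_\lambda,Y)=\sup_\lambda\|\varphi\upharpoonright_{X_\lambda}\|/\|\varphi\|$. Combined with~\eqref{E:ARFS_geom} this yields $\sup_\lambda\|\varphi\upharpoonright_{X_\lambda}\|\geqslant\varepsilon\|\varphi\|$, i.e.~\eqref{E:ARFS}, and Theorem~\ref{T:criterion_ARFS} then finishes the argument. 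The only genuinely delicate point is establishing the duality identity; once the Hahn--Banach formula for $d(x,Y)$ is in hand, the rest is bookkeeping with interchanged suprema, and the converse rests on the simple observation that restricting attention to kernels of functionals makes $Y^\perp$ one-dimensional, so the geometric condition over hyperplanes is exactly~\eqref{E:ARFS}.
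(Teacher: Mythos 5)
Your proof is correct and follows essentially the same route as the paper: both arguments reduce the statement to Theorem~\ref{T:criterion_ARFS} via the Hahn--Banach correspondence between proper closed subspaces and norm-one annihilating functionals, and both handle the converse by taking $Y=\ker\varphi$. The only difference is packaging: you establish the exact identity $\rho_0(X_\lambda,Y)=\sup_{\varphi\in\mathbf{B}_{Y^\perp}}\|\varphi\upharpoonright_{X_\lambda}\|$ from the full duality formula for $d(x,Y)$, whereas the paper gets by with the elementary hyperplane identity $d(x,\ker\varphi)=|\varphi(x)|$ (for $\|\varphi\|=1$) together with the monotonicity $\rho_0(X_\lambda,Y)\geqslant\rho_0(X_\lambda,\ker\varphi)$ when $Y\subset\ker\varphi$.
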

\begin{proof}
We make the following key observation.
Suppose that $\varphi\in X^*$, $\|\varphi\|=1$.
For any subspace $Z\subset X$ we have
\begin{equation*}
\rho_0(Z,\ker(\varphi))=\sup_{x\in\mathbf{B}_Z}d(x,\ker(\varphi))=\sup_{x\in\mathbf{B}_Z}|\varphi(x)|=\|\varphi\upharpoonright_Z\|.
\end{equation*}

$(\Rightarrow)$ 
Consider a subspace $Y\subset X$, $Y\neq X$.
Clearly, there exists $\varphi\in X^*$, $\|\varphi\|=1$, such that $Y\subset\ker(\varphi)$.
For any $\lambda\in\Lambda$, we have $\rho_0(X_\lambda,Y)\geqslant\rho_0(X_\lambda,\ker(\varphi))=\|\varphi\upharpoonright_{X_\lambda}\|$.
Using Theorem~\ref{T:criterion_ARFS}, we get the required assertion.

$(\Leftarrow)$
Consider any $\varphi\in X^*$, $\|\varphi\|=1$.
Set $Y=\ker(\varphi)$. Using~\eqref{E:ARFS_geom}, we get $\sup_{\lambda\in\Lambda}\|\varphi\upharpoonright_{X_\lambda}\|\geqslant\varepsilon$.
From Theorem~\ref{T:criterion_ARFS} it follows that $S$ is an ARFS in $X$.
\end{proof}

\subsection{An application of Theorem~\ref{T:criterion_ARFS}. The stability of ARFS in $X$}\label{SS:stability}

\begin{theorem}\label{T:stability}
If $S=(X;X_\lambda\mid\lambda\in\Lambda)$ is an ARFS in $X$,
then there exists $R>0$ with the following property:
if a family of subspaces $\widetilde{S}=(X;\widetilde{X}_\lambda\mid\lambda\in\Lambda)$ satisfies
$\sup_{\lambda\in\Lambda}\rho_{0}(X_\lambda,\widetilde{X}_\lambda)<R$, then $\widetilde{S}$ is an ARFS in $X$.
\end{theorem}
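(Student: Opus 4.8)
The plan is to reduce everything to the analytic criterion of Theorem~\ref{T:criterion_ARFS} and to control how much the restriction norms $\|\varphi\upharpoonright_{X_\lambda}\|$ can change when $X_\lambda$ is replaced by a nearby subspace $\widetilde{X}_\lambda$. Since $S$ is an ARFS, Theorem~\ref{T:criterion_ARFS} supplies an $\varepsilon>0$ with $\sup_{\lambda\in\Lambda}\|\varphi\upharpoonright_{X_\lambda}\|\geqslant\varepsilon\|\varphi\|$ for all $\varphi\in X^*$. I would simply set $R=\varepsilon$ and show that this choice works: given any $\widetilde{S}$ with $r:=\sup_{\lambda\in\Lambda}\rho_0(X_\lambda,\widetilde{X}_\lambda)<R$, I will verify the criterion for $\widetilde{S}$ with a positive constant.

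The heart of the argument is a pointwise estimate. Fix $\varphi\in X^*$ and $\lambda\in\Lambda$, and take a unit vector $x\in\mathbf{B}_{X_\lambda}$. The definition of $\rho_0$ gives $d(x,\widetilde{X}_\lambda)\leqslant r$, so for any $\delta>0$ there is $\tilde{x}\in\widetilde{X}_\lambda$ with $\|x-\tilde{x}\|\leqslant r+\delta$, and in particular $\|\tilde{x}\|\leqslant 1+r+\delta$. Then
\begin{equation*}
|\varphi(x)|\leqslant|\varphi(x-\tilde{x})|+|\varphi(\tilde{x})|\leqslant(r+\delta)\|\varphi\|+(1+r+\delta)\|\varphi\upharpoonright_{\widetilde{X}_\lambda}\|.
\end{equation*}
Letting $\delta\to 0$ and taking the supremum over $x\in\mathbf{B}_{X_\lambda}$ yields
\begin{equation*}
\|\varphi\upharpoonright_{X_\lambda}\|\leqslant r\|\varphi\|+(1+r)\|\varphi\upharpoonright_{\widetilde{X}_\lambda}\|.
\end{equation*}

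Taking the supremum over $\lambda$ and invoking the ARFS inequality for $S$, I then obtain
\begin{equation*}
\varepsilon\|\varphi\|\leqslant\sup_{\lambda\in\Lambda}\|\varphi\upharpoonright_{X_\lambda}\|\leqslant r\|\varphi\|+(1+r)\sup_{\lambda\in\Lambda}\|\varphi\upharpoonright_{\widetilde{X}_\lambda}\|,
\end{equation*}
whence $\sup_{\lambda\in\Lambda}\|\varphi\upharpoonright_{\widetilde{X}_\lambda}\|\geqslant\frac{\varepsilon-r}{1+r}\|\varphi\|$. Because $r<R=\varepsilon$, the constant $\frac{\varepsilon-r}{1+r}$ is strictly positive and independent of $\varphi$, so $\widetilde{S}$ satisfies the criterion of Theorem~\ref{T:criterion_ARFS} and is therefore an ARFS in $X$.

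The only delicate point is the key estimate, and within it the replacement of $\rho_0$ by a genuine approximating vector $\tilde{x}$: since $\rho_0(X_\lambda,\widetilde{X}_\lambda)$ is a supremum of distances, each of which is itself an infimum, one must pass to approximants with a small slack $\delta$ and let $\delta\to 0$. I also have to make sure that the bound $\|\tilde{x}\|\leqslant 1+r$ entering the estimate of $|\varphi(\tilde{x})|$ is uniform in $x$ and $\lambda$, which it is, since $r$ is the global supremum. Everything else is the triangle inequality together with the duality already encapsulated in Theorem~\ref{T:criterion_ARFS}, so no further structural information about $X$ or about $\Lambda$ is required.
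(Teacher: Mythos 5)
Your proposal is correct and follows essentially the same route as the paper: the paper also sets $R=\varepsilon$ from Theorem~\ref{T:criterion_ARFS} and proves (as Lemma~\ref{L:stability}) the very same bound $\sup_{\lambda\in\Lambda}\|\varphi\upharpoonright_{\widetilde{X}_\lambda}\|\geqslant\frac{\varepsilon-r}{1+r}\|\varphi\|$ via the identical approximation-plus-triangle-inequality argument. The only difference is bookkeeping: the paper picks a near-maximizing pair $(\lambda,x)$ and auxiliary parameters $r<r_1<\varepsilon_1<\varepsilon$, whereas you prove the uniform perturbation inequality $\|\varphi\upharpoonright_{X_\lambda}\|\leqslant r\|\varphi\|+(1+r)\|\varphi\upharpoonright_{\widetilde{X}_\lambda}\|$ and then take suprema, which is equally valid.
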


Theorem~\ref{T:stability} is a consequence of the following lemma and Theorem~\ref{T:criterion_ARFS}.

\begin{lemma}\label{L:stability}
Let $S=(X;X_\lambda\mid\lambda\in\Lambda)$ be an ARFS in $X$. Suppose $\varepsilon>0$ is such that
the inequality~\eqref{E:ARFS} holds for any $\varphi\in X^*$.
If $r=\sup_{\lambda\in\Lambda}\rho_0(X_\lambda,\widetilde{X}_\lambda)<\varepsilon$, then
\begin{equation*}
\sup_{\lambda\in\Lambda}\|\varphi\upharpoonright_{\widetilde{X}_\lambda}\|\geqslant\frac{\varepsilon-r}{1+r}\|\varphi\|,\quad \varphi\in X^*.
\end{equation*}
\end{lemma}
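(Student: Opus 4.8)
The plan is to reduce everything to a single perturbation estimate comparing the restriction norms on $X_\lambda$ and $\widetilde{X}_\lambda$, and then to feed it into the ARFS inequality~\eqref{E:ARFS} for $S$. Concretely, I aim to establish, for each fixed $\varphi\in X^*$, the pointwise comparison
\[
\|\varphi\upharpoonright_{X_\lambda}\|\leqslant(1+r)\,\|\varphi\upharpoonright_{\widetilde{X}_\lambda}\|+r\,\|\varphi\|,\qquad\lambda\in\Lambda,
\]
after which the conclusion follows by taking the supremum over $\lambda$. Since both sides of the desired inequality are homogeneous of degree one in $\varphi$, it suffices to treat the normalized case $\|\varphi\|=1$ (the case $\varphi=0$ being trivial), so I will work toward $\|\varphi\upharpoonright_{X_\lambda}\|\leqslant(1+r)\|\varphi\upharpoonright_{\widetilde{X}_\lambda}\|+r$.

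First I would fix $\lambda$ and an arbitrary unit vector $x\in\mathbf{B}_{X_\lambda}$. Because $d(x,\widetilde{X}_\lambda)\leqslant\rho_0(X_\lambda,\widetilde{X}_\lambda)\leqslant r$, for every $\eta>0$ there is $\tilde{x}\in\widetilde{X}_\lambda$ with $\|x-\tilde{x}\|<r+\eta$; the triangle inequality then gives $\|\tilde{x}\|\leqslant 1+r+\eta$. Splitting $\varphi(x)=\varphi(\tilde{x})+\varphi(x-\tilde{x})$ and using $\|\varphi\|=1$ yields
\[
|\varphi(x)|\leqslant\|\varphi\upharpoonright_{\widetilde{X}_\lambda}\|\,\|\tilde{x}\|+\|x-\tilde{x}\|\leqslant(1+r+\eta)\,\|\varphi\upharpoonright_{\widetilde{X}_\lambda}\|+(r+\eta).
\]
Letting $\eta\to 0$ and then taking the supremum over $x\in\mathbf{B}_{X_\lambda}$ produces the claimed comparison; the degenerate case $X_\lambda=0$ is immediate under the stated conventions.

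Next I would take the supremum over $\lambda\in\Lambda$ in this comparison and invoke the hypothesis that $S$ is an ARFS, so that~\eqref{E:ARFS} gives $\sup_{\lambda}\|\varphi\upharpoonright_{X_\lambda}\|\geqslant\varepsilon$ for our normalized $\varphi$. This yields $\varepsilon\leqslant(1+r)\sup_{\lambda}\|\varphi\upharpoonright_{\widetilde{X}_\lambda}\|+r$, and solving for the surviving supremum delivers $\sup_{\lambda}\|\varphi\upharpoonright_{\widetilde{X}_\lambda}\|\geqslant(\varepsilon-r)/(1+r)$, that is, the assertion for $\|\varphi\|=1$; rescaling then handles a general $\varphi$. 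The hypothesis $r<\varepsilon$ enters only to guarantee that the resulting constant $(\varepsilon-r)/(1+r)$ is positive, so that the bound carries information.

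I do not expect a genuine obstacle here: the entire argument rests on the one perturbation estimate above. The only point demanding care is the bookkeeping of the factor $1+r$, which comes from the bound $\|\tilde{x}\|\leqslant 1+r+\eta$ rather than a naive $\|\tilde{x}\|\leqslant 1$, and which is precisely what forces the denominator $1+r$ in the final inequality. The passage $\eta\to 0$ and the use of near-optimal approximants to the distance $d(x,\widetilde{X}_\lambda)$ are the only mildly technical moves.
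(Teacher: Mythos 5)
Your proof is correct, and it is essentially the paper's argument: both rest on the same perturbation step (approximate a unit vector $x\in X_\lambda$ by $\widetilde{x}\in\widetilde{X}_\lambda$ within $r+\eta$, bound $\|\widetilde{x}\|\leqslant 1+r+\eta$, and apply the triangle inequality) combined with the ARFS inequality~\eqref{E:ARFS}. The only cosmetic difference is that the paper first selects a near-maximizing $\lambda$ and $x$ with $|\varphi(x)|>\varepsilon_1$ and then perturbs, whereas you prove the comparison of restriction norms uniformly in $\lambda$ and take suprema at the end; both routes yield the same constant $(\varepsilon-r)/(1+r)$.
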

\begin{proof}
Consider arbitrary $\varepsilon_1,r_1$ such that $r<r_1<\varepsilon_1<\varepsilon$.
Let $\varphi\in X^*$, $\|\varphi\|=1$.
There exist $\lambda\in\Lambda$ and $x\in X_\lambda$, $\|x\|=1$, such that $|\varphi(x)|>\varepsilon_1$.
Since $\rho_0(X_\lambda,\widetilde{X}_\lambda)\leqslant r$, we have $d(x,\widetilde{X}_\lambda)\leqslant r$.
Hence, there exists $\widetilde{x}\in \widetilde{X}_\lambda$ such that $\|x-\widetilde{x}\|<r_1$.
Then
\begin{equation*}
|\varphi(\widetilde{x})|\geqslant|\varphi(x)|-|\varphi(x-\widetilde{x})|\geqslant\varepsilon_1-r_1.
\end{equation*}
Since $\|\widetilde{x}\|\leqslant 1+r_1$, we have
\begin{equation*}
\|\varphi\upharpoonright_{\widetilde{X}_\lambda}\|\geqslant\frac{\varepsilon_1-r_1}{1+r_1}.
\end{equation*}
Since $\varepsilon_1,r_1$ were arbitrary, we get the needed assertion.
\end{proof}

\section{A necessary condition for $S$ to be an ARFS in $X$}\label{S:necessary_condition}

Let $X$ be a Banach space, and $S=(X;X_\lambda\mid\lambda\in\Lambda)$ be a family of subspaces of $X$.

If $S$ is an ARFS in $X$, then a more stronger result than the inequality~\eqref{E:ARFS} holds.
To formulate this result we need to introduce a few auxiliary notions.
Let $f:X\to\mathbb{R}$ be a mapping.
Consider the following three conditions:\\
\textbf{(ZZ)} $f(0)=0$;\\
\textbf{(CZ)} $f$ is continuous at $0$;\\
\textbf{(SA)} (the subadditivity of $f$) $f(x+y)\leqslant f(x)+f(y)$ for any $x,y\in X$.\\
Define
\begin{equation*}
\|f\|=\sup_{x\neq 0}\frac{f(x)}{\|x\|}.
\end{equation*}
Note that, if $f$ satisfies (ZZ) and (SA), then $\|f\|\geqslant 0$ (we have $f(x)+f(-x)\geqslant 0$, hence, $\max\{f(x),f(-x)\}\geqslant 0$).
Let us provide some examples of mappings $f$ which satisfy (ZZ), (CZ), and (SA).

\begin{example}
If $f$ is a seminorm on $X$, and $f$ is continuous with respect to $\|\cdot\|$, then
$f$ satisfies (ZZ), (CZ), and (SA).
\end{example}

\begin{example}
Let $(T,\mathcal{F},\mu)$ be a measure space with $\mu(X)\in(0,\infty)$.
Let $p\in[1,\infty)$.
Set $X=L_p(T,\mathcal{F},\mu)$, $\|\cdot\|=\|\cdot\|_p$, where $\|x(t)\|_p=(\int_T|x(t)|^p\,dt)^{1/p}$.
If $r\in[1,p]$, then we can define $f(x)=\|x\|_r$, $x\in X$.
Clearly, $f$ satisfies (ZZ), (CZ), (SA), and $\|f\|=(\mu(X))^{1/r-1/p}$.
\end{example}

\begin{example}
Let a set $Y\subset X$ be such that $0\in Y$ and $y_1+y_2\in Y$ for any $y_1,y_2\in Y$.
Then the mapping $f(x)=d(x,Y)$ satisfies (ZZ), (CZ), (SA).
\end{example}

\begin{theorem}\label{T:necessary_ARFS}
Suppose $S$ is an ARFS in $X$.
Then there exists an $\varepsilon>0$ which possesses the following property:
if a mapping $f:X\to\mathbb{R}$ satisfies (ZZ), (CZ), (SA), then
\begin{equation*}
\sup_{\lambda\in\Lambda}\|f\upharpoonright_{X_\lambda}\|\geqslant\varepsilon\|f\|
\end{equation*}
(if $X_\lambda=0$, then we set $\|f\upharpoonright_{X_\lambda}\|=0$).
\end{theorem}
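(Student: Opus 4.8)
The plan is to deduce this directly from the surjectivity of the summation operator $A\colon\ell_1(X_\lambda\mid\lambda\in\Lambda)\to X$, $A(x_\lambda\mid\lambda\in\Lambda)=\sum_{\lambda\in\Lambda}x_\lambda$, the very operator appearing in the proof of Theorem~\ref{T:criterion_ARFS}. Since $S$ is an ARFS, $A$ is surjective, so by the open mapping theorem there exists a constant $\varepsilon>0$, independent of everything to follow, such that every $x\in X$ admits a representation $x=\sum_{\lambda\in\Lambda}x_\lambda$ with $x_\lambda\in X_\lambda$ and $\sum_{\lambda\in\Lambda}\|x_\lambda\|\leqslant\varepsilon^{-1}\|x\|$. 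This $\varepsilon$ will be the one claimed in the theorem; the point of the statement is precisely that it does not depend on the choice of $f$.

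Next I would fix a mapping $f$ satisfying (ZZ), (CZ), (SA), record the elementary bound $f(y)\leqslant\|f\upharpoonright_{X_\lambda}\|\,\|y\|$ for $y\in X_\lambda$ (valid also for $y=0$ by (ZZ)), and set $M=\sup_{\lambda\in\Lambda}\|f\upharpoonright_{X_\lambda}\|$. Here I would note $M\geqslant 0$ by the argument already used in the text, namely $f(y)+f(-y)\geqslant f(0)=0$. Now fix $x\neq 0$, choose a representation as above, and enumerate the at most countably many nonzero terms as $x_{\lambda_1},x_{\lambda_2},\dots$, with partial sums $s_n=\sum_{k=1}^{n}x_{\lambda_k}\to x$.

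The crux is to pass from the finite subadditivity (SA) to an inequality for the infinite series, and this is where (CZ) and (ZZ) enter. Writing $f(x)=f(s_n+(x-s_n))\leqslant f(s_n)+f(x-s_n)$ and $f(s_n)\leqslant\sum_{k=1}^{n}f(x_{\lambda_k})$, I would let $n\to\infty$: since $x-s_n\to 0$, continuity at $0$ together with $f(0)=0$ gives $f(x-s_n)\to 0$, whence $f(x)\leqslant\liminf_n\sum_{k=1}^{n}f(x_{\lambda_k})$. Bounding each term by $f(x_{\lambda_k})\leqslant M\|x_{\lambda_k}\|$ (using $M\geqslant 0$ to sum safely) and invoking the representation estimate yields $f(x)\leqslant M\sum_{\lambda\in\Lambda}\|x_\lambda\|\leqslant M\varepsilon^{-1}\|x\|$. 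Dividing by $\|x\|$ and taking the supremum over $x\neq 0$ gives $\|f\|\leqslant M\varepsilon^{-1}$, i.e. $\sup_{\lambda\in\Lambda}\|f\upharpoonright_{X_\lambda}\|=M\geqslant\varepsilon\|f\|$, as required.

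I expect the main obstacle to be exactly that middle step, legitimizing the move from finite to countable subadditivity, rather than any of the estimates: (SA) by itself says nothing about infinite sums, and one genuinely needs the regularity hypotheses (CZ) and (ZZ) to annihilate the tail $f(x-s_n)$. A minor point to handle cleanly is the derivation of the uniform representation constant from surjectivity: the open mapping theorem first yields that the $A$-image of the unit ball of $\ell_1(X_\lambda\mid\lambda\in\Lambda)$ is dense in some ball of $X$, and a standard successive-approximation argument then upgrades this to an honest representation $x=\sum_{\lambda\in\Lambda}x_\lambda$ with $\sum_{\lambda\in\Lambda}\|x_\lambda\|\leqslant\varepsilon^{-1}\|x\|$. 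Finally, I would remark that Theorem~\ref{T:criterion_ARFS} is recovered as the special case $f=|\varphi|$ for $\varphi\in X^*$, since $|\varphi|$ satisfies (ZZ), (CZ), (SA) and $\|\,|\varphi|\upharpoonright_{X_\lambda}\|=\|\varphi\upharpoonright_{X_\lambda}\|$, confirming that the statement is indeed a strengthening of~\eqref{E:ARFS}.
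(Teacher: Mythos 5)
Your proposal is correct and takes essentially the same approach as the paper's own proof: the same summation operator $A\colon\ell_1(X_\lambda\mid\lambda\in\Lambda)\to X$, the open mapping theorem to extract a uniform representation constant, and the same finite-subadditivity-plus-tail estimate $f(x)\leqslant\sum_{k=1}^{n}f(x_{\lambda_k})+f\bigl(x-\sum_{k=1}^{n}x_{\lambda_k}\bigr)$ with (CZ) and (ZZ) annihilating the tail as $n\to\infty$. The differences (notation, the $\liminf$ phrasing, and the closing remark recovering Theorem~\ref{T:criterion_ARFS} as a special case) are cosmetic.
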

\begin{proof}
We consider the case where $\Lambda$ is infinite, a proof for the case $|\Lambda|<\infty$ is similar.
Define the operator $A:\ell_1(X_\lambda\mid\lambda\in\Lambda)\to X$ by $A(x_\lambda\mid\lambda\in\Lambda)=\sum_{\lambda}x_\lambda$.
Since $S$ is an ARFS in $X$, we conclude that $\mathrm{Im}(A)=X$.
By the open mapping theorem, there exists $M>0$ such that for any
$x\in X$ there exist $x_\lambda\in X_\lambda$, $\lambda\in\Lambda$, such that
\begin{equation}\label{E:open_mapping}
\sum_{\lambda\in\Lambda}x_\lambda=x \quad\text{and}\quad \sum_{\lambda\in\Lambda}\|x_\lambda\|\leqslant M\|x\|.
\end{equation}
Consider any $f:X\to\mathbb{R}$ satisfying (ZZ), (CZ), (SA).
Fix any $x\in X$.
Let $x_\lambda\in X_\lambda$, $\lambda\in\Lambda$, satisfy~\eqref{E:open_mapping}.
There exists a countable set $\{\lambda_j\mid j\geqslant 1\}\subset\Lambda$ such that $x_\lambda=0$ for any $\lambda\notin\{\lambda_j\mid j\geqslant 1\}$.
For any $n\in\mathbb{N}$ we have
\begin{align*}
&f(x)\leqslant\sum_{j=1}^n f(x_{\lambda_j})+f(x-\sum_{j=1}^n x_{\lambda_j})\leqslant
\sum_{j=1}^n\|f\upharpoonright_{X_{\lambda_j}}\|\|x_{\lambda_j}\|+f(x-\sum_{j=1}^n x_{\lambda_j})\leqslant\\
&\leqslant\sup_{\lambda\in\Lambda}\|f\upharpoonright_{X_\lambda}\|\sum_{j=1}^n\|x_{\lambda_j}\|+f(x-\sum_{j=1}^n x_{\lambda_j})
\leqslant M\|x\|\sup_{\lambda\in\Lambda}\|f\upharpoonright_{X_\lambda}\|+f(x-\sum_{j=1}^n x_{\lambda_j}).
\end{align*}
Letting $n\to\infty$ and using (CZ), we get
\begin{equation*}
f(x)\leqslant M\|x\|\sup_{\lambda\in\Lambda}\|f\upharpoonright_{X_\lambda}\|\Rightarrow
\sup_{\lambda\in\Lambda}\|f\upharpoonright_{X_\lambda}\|\geqslant\frac{1}{M}\frac{f(x)}{\|x\|}.
\end{equation*}
This proves the required assertion with $\varepsilon=1/M$.
\end{proof}

\section{Sufficient conditions for $S$ to be an ARFS in $X$ and their applications}\label{S:sufficient_conditions_appl}

Let $X$ be a Banach space, and $S=(X;X_\lambda\mid\lambda\in\Lambda)$ be a family of subspaces of $X$.

\subsection{Sufficient conditions for $S$ to be an ARFS in $X$}\label{SS:sufficient_conditions}

By Theorem~\ref{T:criterion_ARFS}, $S$ is an ARFS in $X$ if and only if there exists an $\varepsilon>0$ such that
\begin{equation}\label{E:ineq_ARFS_sufficient}
\sup_{\lambda\in\Lambda}\|\varphi\upharpoonright_{X_\lambda}\|\geqslant\varepsilon\|\varphi\|
\end{equation}
for any $\varphi\in X^*$.

Let $Y$ be a subspace of $X$.
Denote by $Y^{\bot}$ the set of all $\varphi\in X^*$ such that $\varphi\upharpoonright_{Y}=0$.
Under some additional conditions we show that if the inequality~\eqref{E:ineq_ARFS_sufficient} holds for
$\varphi\in Y^\bot$, then $S$ is an ARFS in $X$.

Define $\mathcal{L}(X_\lambda\mid\lambda\in\Lambda)$ to be the linear span of $X_\lambda$, $\lambda\in\Lambda$.
Recall that as a measure of closeness of a subspace $Y$ to a subspace $Z$ we consider the quantity $\rho_0(Y,Z)$ defined by~\eqref{E:rho_0}.

\begin{theorem}\label{T:criterion_restriction_phi}
Let $Y$ be a subspace of $X$.
Suppose that for any $\delta>0$ there exists a subspace $L\subset\mathcal{L}(X_\lambda\mid\lambda\in\Lambda)$ with $\rho_0(Y,L)<\delta$.
If there exists $\varepsilon>0$ such that
\begin{equation*}
\sup_{\lambda\in\Lambda}\|\varphi\upharpoonright_{X_\lambda}\|\geqslant\varepsilon\|\varphi\|,\quad\varphi\in Y^{\bot},
\end{equation*}
then $S$ is an ARFS in $X$.
\end{theorem}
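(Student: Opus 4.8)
The plan is to reduce everything to the dual criterion of Theorem~\ref{T:criterion_ARFS}: I will show that the two hypotheses force the existence of an $\varepsilon'>0$ with $\sup_{\lambda\in\Lambda}\|\varphi\upharpoonright_{X_\lambda}\|\geqslant\varepsilon'\|\varphi\|$ for \emph{all} $\varphi\in X^*$, not merely for $\varphi\in Y^{\bot}$. Throughout I reuse the operator $A\colon\ell_1(X_\lambda\mid\lambda\in\Lambda)\to X$, $A(x_\lambda\mid\lambda\in\Lambda)=\sum_\lambda x_\lambda$, from the proof of Theorem~\ref{T:criterion_ARFS}; recall that $A^*\varphi=(\varphi\upharpoonright_{X_\lambda}\mid\lambda\in\Lambda)$, so that $\Phi(\varphi):=\sup_\lambda\|\varphi\upharpoonright_{X_\lambda}\|=\|A^*\varphi\|_\infty$, and that $\mathrm{Im}(A)\supseteq\mathcal{L}(X_\lambda\mid\lambda\in\Lambda)$ since every finite sum is the image of a finitely supported sequence.

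First I would record the crucial estimate that controls $\varphi$ on approximants of $Y$. Fix $\delta>0$ and take a subspace $L\subset\mathcal{L}(X_\lambda\mid\lambda\in\Lambda)$ with $\rho_0(Y,L)<\delta$ provided by the hypothesis. Since $L$ is closed and $L\subseteq\mathrm{Im}(A)$, the preimage $A^{-1}(L)$ is a closed (hence Banach) subspace of $\ell_1(X_\lambda\mid\lambda\in\Lambda)$, and $A$ maps it \emph{onto} $L$; by the open mapping theorem there is $C_L>0$ such that every $\ell\in L$ admits $\zeta\in\ell_1(X_\lambda\mid\lambda\in\Lambda)$ with $A\zeta=\ell$ and $\|\zeta\|_1\leqslant C_L\|\ell\|$. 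Consequently $|\varphi(\ell)|=|(A^*\varphi)(\zeta)|\leqslant\|A^*\varphi\|_\infty\|\zeta\|_1\leqslant C_L\Phi(\varphi)\|\ell\|$ for every $\ell\in L$. Approximating each unit vector of $Y$ within $\delta$ by an element of $L$ then gives
\begin{equation*}
\|\varphi\upharpoonright_Y\|\leqslant C_L(1+\delta)\Phi(\varphi)+\delta\|\varphi\|,\qquad\varphi\in X^*.
\end{equation*}

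Next I would transfer the $Y^{\bot}$-estimate to an arbitrary $\varphi\in X^*$ by a Hahn--Banach splitting. Extend $\varphi\upharpoonright_Y$ to some $\widetilde{\eta}\in X^*$ with $\|\widetilde{\eta}\|=\|\varphi\upharpoonright_Y\|$ and set $\psi=\varphi-\widetilde{\eta}\in Y^{\bot}$. Using the subadditivity of $\Phi$ together with $\Phi(\widetilde{\eta})\leqslant\|\widetilde{\eta}\|$, the assumed bound $\Phi(\psi)\geqslant\varepsilon\|\psi\|$ yields
\begin{equation*}
\Phi(\varphi)+\|\varphi\upharpoonright_Y\|\geqslant\Phi(\psi)\geqslant\varepsilon\|\psi\|\geqslant\varepsilon\bigl(\|\varphi\|-\|\varphi\upharpoonright_Y\|\bigr),
\end{equation*}
that is, $\Phi(\varphi)\geqslant\varepsilon\|\varphi\|-(1+\varepsilon)\|\varphi\upharpoonright_Y\|$. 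Substituting the estimate of the previous paragraph (the coefficient of $\|\varphi\upharpoonright_Y\|$ being negative, an upper bound for $\|\varphi\upharpoonright_Y\|$ is exactly what is needed) and fixing $\delta$ small, e.g.\ $\delta=\varepsilon/(2(1+\varepsilon))$, which determines $L$ and $C_L$ once and for all, I can solve for $\Phi(\varphi)$ and obtain $\Phi(\varphi)\geqslant\varepsilon'\|\varphi\|$ with $\varepsilon'=(\varepsilon/2)/(1+(1+\varepsilon)C_L(1+\delta))>0$, uniformly in $\varphi$. Theorem~\ref{T:criterion_ARFS} then finishes the proof.

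The main obstacle is the estimate of the second paragraph. A priori, $\varphi$ being uniformly small on each individual $X_\lambda$ says nothing about its size on the linear span $\mathcal{L}(X_\lambda\mid\lambda\in\Lambda)$, since representing an element of the span as a finite sum $\sum_j x_j$ may force an arbitrarily large $\ell_1$-cost $\sum_j\|x_j\|$, which would destroy the bound $|\varphi(\ell)|\leqslant\Phi(\varphi)\sum_j\|x_j\|$. The fact that rescues the argument is that $L$ is a \emph{closed} subspace lying inside $\mathrm{Im}(A)$: the open mapping theorem then supplies representations whose $\ell_1$-cost is comparable to $\|\cdot\|$ \emph{uniformly} on $L$, and it is precisely this uniform constant $C_L$ that allows $\Phi(\varphi)$ to dominate $\varphi$ on the $L$-approximants of $Y$, and hence on $Y$ itself.
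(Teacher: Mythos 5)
Your proof is correct, but it resolves the crucial difficulty by a different mechanism than the paper. Both arguments share the same skeleton: approximate $Y$ by the subspace $L\subset\mathcal{L}(X_\lambda\mid\lambda\in\Lambda)$, split a functional by Hahn--Banach into a part vanishing on $Y$ plus a part of norm $\|\varphi\upharpoonright_Y\|$, and invoke Theorem~\ref{T:criterion_ARFS}. The difference lies in how one controls $\varphi$ on $L$ knowing only that $\varphi$ is small on each individual $X_\lambda$ --- exactly the obstacle you name in your closing paragraph. You handle it directly and quantitatively: since $L$ is closed and $L\subset\mathcal{L}(X_\lambda\mid\lambda\in\Lambda)\subset\mathrm{Im}(A)$, the restriction of $A$ to $A^{-1}(L)$ is a continuous surjection of Banach spaces onto $L$, and the open mapping theorem gives a uniform constant $C_L$ with $\|\varphi\upharpoonright_L\|\leqslant C_L\sup_{\lambda\in\Lambda}\|\varphi\upharpoonright_{X_\lambda}\|$; after that everything is algebra, with the quantifiers in the right order ($\delta$, then $L$ and $C_L$, fixed before $\varphi$ ranges over $X^*$). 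The paper instead argues by contradiction and never introduces such a constant: if $S$ is not an ARFS, then the enlarged family consisting of $L$ together with all $X_\lambda$ is also not an ARFS (adjoining a subspace of the linear span cannot create an ARFS, because a single element $l\in L$ is a finite sum of elements of the $X_\lambda$, so any absolutely convergent representation using $L$ converts into one without it), and Theorem~\ref{T:criterion_ARFS} applied to this enlarged family directly produces a norm-one $\varphi$ that is simultaneously $\delta$-small on $L$ and on every $X_\lambda$; the Hahn--Banach splitting of that particular $\varphi$ then contradicts the hypothesis on $Y^{\bot}$. The paper's route is shorter and purely qualitative, reusing the criterion so that the open mapping theorem is never invoked a second time; your route is direct rather than by contradiction and yields an explicit lower bound $\varepsilon'=(\varepsilon/2)/\bigl(1+(1+\varepsilon)C_L(1+\delta)\bigr)$ for the constant in Theorem~\ref{T:criterion_ARFS} --- quantitative modulo the non-explicit constant $C_L$ --- at the cost of that extra open-mapping argument.
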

\begin{proof}
Suppose that $S$ is not an ARFS in $X$.

Take an arbitrary $\delta\in(0,1/3)$.
There exists a subspace $L\subset\mathcal{L}(X_\lambda\mid\lambda\in\Lambda)$ with $\rho_0(Y,L)<\delta$.
Clearly, the family $L$, $X_\lambda$, $\lambda\in\Lambda$, is not an ARFS in $X$.
From Theorem~\ref{T:criterion_ARFS} it follows that there exists $\varphi\in X^{*}$, $\|\varphi\|=1$, such that
$\|\varphi\upharpoonright_{L}\|<\delta$ and
$\|\varphi\upharpoonright_{X_\lambda}\|<\delta$ for $\lambda\in\Lambda$.

Let us estimate from above $\|\varphi\upharpoonright_Y\|$.
Take any $y\in Y$, $y\neq 0$.
From $\rho_0(Y,L)<\delta$ it follows that $d(y,L)<\delta\|y\|$.
Hence, there exists $l\in L$ such that $\|y-l\|\leqslant\delta\|y\|$.
Then $\|l\|\leqslant (1+\delta)\|y\|\leqslant 2\|y\|$.
We have
\begin{equation*}
|\varphi(y)|\leqslant|\varphi(y-l)|+|\varphi(l)|\leqslant\|y-l\|+\delta\|l\|\leqslant \delta\|y\|+2\delta\|y\|=3\delta\|y\|.
\end{equation*}
Hence, $\|\varphi\upharpoonright_Y\|\leqslant 3\delta$.

By the Hahn-Banach theorem, there exists a $\psi\in X^*$ such that $\psi\upharpoonright_Y=\varphi\upharpoonright_Y$ and
$\|\psi\|\leqslant\|\varphi\upharpoonright_Y\|\leqslant 3\delta$.
Set $\eta=\varphi-\psi$.
Then $\eta\in Y^\bot$, $\|\eta\|\geqslant\|\varphi\|-\|\psi\|\geqslant 1-3\delta$, and
\begin{equation*}
\|\eta\upharpoonright_{X_\lambda}\|\leqslant\|\varphi\upharpoonright_{X_\lambda}\|+\|\psi\upharpoonright_{X_\lambda}\|\leqslant
\delta+3\delta=4\delta
\end{equation*}
for any $\lambda\in\Lambda$.
Define $\widetilde{\eta}=\eta/\|\eta\|$.
Then $\widetilde{\eta}\in Y^{\bot}$, $\|\widetilde{\eta}\|=1$, and
$\|\widetilde{\eta}\upharpoonright_{X_\lambda}\|\leqslant 4\delta/(1-3\delta)$ for any $\lambda\in\Lambda$.
Since $\delta$ was arbitrary, we get a contradiction.
The proof is complete.
\end{proof}

We note the following corollary of Theorem~\ref{T:criterion_restriction_phi}, which generalizes Theorem~3 of~\cite{Shraifel_2}.

\begin{theorem}\label{T:criterion_cofinite_dim}
Let $Y$ be a finite dimensional subspace of $X$.
Suppose that $\mathcal{L}(X_\lambda\mid\lambda\in\Lambda)$ is dense in $X$.
If there exists an $\varepsilon>0$ such that
\begin{equation}\label{E:ineq_cofinite_dim}
\sup_{\lambda\in\Lambda}\|\varphi\upharpoonright_{X_\lambda}\|\geqslant\varepsilon\|\varphi\|,\quad\varphi\in Y^{\bot},
\end{equation}
then $S$ is an ARFS in $X$.
\end{theorem}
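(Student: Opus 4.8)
The plan is to deduce the statement from Theorem~\ref{T:criterion_restriction_phi}. The inequality~\eqref{E:ineq_cofinite_dim} is precisely the analytic hypothesis of that theorem, so all that remains is to verify its geometric hypothesis: that for every $\delta>0$ there exists a subspace $L\subset\mathcal{L}(X_\lambda\mid\lambda\in\Lambda)$ with $\rho_0(Y,L)<\delta$. I would first dispose of the trivial case $Y=\{0\}$ (there one may take $L=\{0\}$, or simply observe that $Y^\bot=X^*$, so the claim follows at once from Theorem~\ref{T:criterion_ARFS}), and henceforth assume $\dim Y=n\geqslant 1$.

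First I would fix a basis $y_1,\ldots,y_n$ of $Y$ and take $L$ to be the linear span of suitable approximants of these basis vectors inside $\mathcal{L}(X_\lambda\mid\lambda\in\Lambda)$. To keep the approximation error under control uniformly over the sphere $\mathbf{B}_Y$, I would use the equivalence of norms on the finite-dimensional space $Y$: there is a constant $C>0$ such that $\max_{1\leqslant i\leqslant n}|c_i|\leqslant C\|\sum_{i=1}^n c_i y_i\|$ for all scalars $c_1,\ldots,c_n$.

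Next, given $\delta>0$, I would use the density of $\mathcal{L}(X_\lambda\mid\lambda\in\Lambda)$ in $X$ to choose $l_i\in\mathcal{L}(X_\lambda\mid\lambda\in\Lambda)$ with $\|y_i-l_i\|<\delta/(2Cn)$ for each $i$, and let $L$ be the linear span of $l_1,\ldots,l_n$. Since each $l_i$ lies in the linear space $\mathcal{L}(X_\lambda\mid\lambda\in\Lambda)$, we have $L\subset\mathcal{L}(X_\lambda\mid\lambda\in\Lambda)$; moreover $L$ is finite-dimensional, hence closed, so it is a subspace of $X$. For $y=\sum_{i=1}^n c_i y_i\in Y$ with $\|y\|=1$, set $l=\sum_{i=1}^n c_i l_i\in L$; then $\max_i|c_i|\leqslant C$ and
\[
d(y,L)\leqslant\|y-l\|\leqslant\sum_{i=1}^n|c_i|\,\|y_i-l_i\|\leqslant C\sum_{i=1}^n\|y_i-l_i\|<C\cdot n\cdot\frac{\delta}{2Cn}=\frac{\delta}{2}.
\]
Taking the supremum over $y\in\mathbf{B}_Y$ gives $\rho_0(Y,L)\leqslant\delta/2<\delta$, exactly as required, and Theorem~\ref{T:criterion_restriction_phi} then yields that $S$ is an ARFS in $X$.

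I expect the only genuine content to be the uniform control of the coordinates $c_i$ in terms of $\|y\|$; this is where the finite-dimensionality of $Y$ is indispensable, and it is handled cleanly by the equivalence of norms (equivalently, by compactness of $\mathbf{B}_Y$). The remaining steps are a routine density-and-linearity estimate, so no serious obstacle is anticipated.
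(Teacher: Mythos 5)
Your proof is correct and follows essentially the same route as the paper: approximate a basis of $Y$ by elements of $\mathcal{L}(X_\lambda\mid\lambda\in\Lambda)$ using density, control the coordinates via finite-dimensional norm equivalence to get $\rho_0(Y,L)<\delta$, and then invoke Theorem~\ref{T:criterion_restriction_phi}. The only differences are cosmetic (you bound $\max_i|c_i|$ where the paper bounds $\sum_k|t_k|$, you skip the paper's unnecessary normalization $\|l_k\|=1$, and you explicitly treat the case $Y=\{0\}$), so nothing further is needed.
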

\begin{proof}
Let us prove that for any $\delta>0$ there exists a subspace $L\subset\mathcal{L}(X_\lambda\mid\lambda\in\Lambda)$ with $\rho_0(Y,L)<\delta$.

Let $y_{1},\ldots,y_{m}$ be a basis of $Y$ with $\|y_j\|=1$ for $j=1,\ldots,m$.
There exists $c>0$ such that $\|\sum_{k=1}^{m}t_{k}y_{k}\|\geqslant c\sum_{k=1}^{m}|t_{k}|$ for any $t_1,\ldots,t_m\in\mathbb{K}$.
Fix any $\delta_1>0$.
Since $\mathcal{L}(X_\lambda\mid\lambda\in\Lambda)$ is dense in $X$, there exist
$l_{1},\ldots,l_{m}\in\mathcal{L}(X_\lambda\mid\lambda\in\Lambda)$ such that $\|y_{k}-l_{k}\|<\delta_1$, $\|l_{k}\|=1$ for $k=1,\ldots,m$.
Define $L$ to be the subspace spanned by $l_1,\ldots,l_m$.
Let $y\in Y$.
Then $y=\sum_{k=1}^m t_k y_k$ for some $t_1,\ldots,t_m\in\mathbb{K}$.
Define $l=\sum_{k=1}^m t_k l_k$.
We have
\begin{equation*}
\|y-l\|=\|\sum_{k=1}^m t_k(y_k-l_k)\|\leqslant\delta_1\sum_{k=1}^m|t_k|\leqslant \delta_1 c^{-1}\|y\|.
\end{equation*}
Hence, $d(y,L)\leqslant \delta_1 c^{-1}\|y\|$, whence $\rho_0(Y,L)\leqslant \delta_1 c^{-1}$.

By Theorem~\ref{T:criterion_restriction_phi}, $S$ is an ARFS in $X$.
\end{proof}

\begin{remark}
Theorem~\ref{T:criterion_cofinite_dim} is not valid without the assumption that $Y$ is finite dimensional.
To see this, consider the following example.
Let $X$ be a Hilbert space. Then we can identify $X^*$ with $X$.
Let $Y$ be a subspace of $X$, $\dim Y=\infty$.
Let $e_i$, $i\in I$, be an orthonormal basis of $Y$.
We assume that $0\notin I$.
Define $X_0=Y^{\bot}$, $X_i=\langle e_i\rangle$ (the one-dimensional subspace spanned by $e_i$), $i\in I$.
Clearly, $\mathcal{L}(X_0,X_i\mid i\in I)$ is dense in $X$, and the inequality~\eqref{E:ineq_cofinite_dim} holds (we can set $\varepsilon=1$).
But $X_0$, $X_i$, $i\in I$, is not an ARFS in $X$.
\end{remark}

\begin{remark}
Clearly, Theorem~\ref{T:criterion_cofinite_dim} is not valid without the assumption that $\mathcal{L}(X_\lambda\mid\lambda\in\Lambda)$
is dense in $X$.
\end{remark}

\subsection{An application of Theorems~\ref{T:criterion_restriction_phi}, \ref{T:criterion_cofinite_dim}.
On subfamilies of ARFS in $X$}\label{SS:delete_ARFS}

Suppose $S=(X;X_\lambda\mid\lambda\in\Lambda)$ is an ARFS in $X$.
Let $\Lambda'$ be a subset of $\Lambda$.
We delete the subspaces $X_\lambda$, $\lambda\in\Lambda'$, from $S$.
Hence, we get the family of subspaces $X_\lambda$, $\lambda\in\Lambda\setminus\Lambda'$.
We will give sufficient conditions for this family of subspaces to be an ARFS in $X$.

Denote by $\langle X_\lambda\mid\lambda\in\Lambda'\rangle$ the subspace spanned by $X_\lambda$, $\lambda\in\Lambda'$.

\begin{theorem}\label{T:ARFS_delete}
Let $X_\lambda$, $\lambda\in\lambda$, be an ARFS in $X$, and $\Lambda'$ be a subset of $\Lambda$.
Suppose that for any $\delta>0$ there is a subspace $L\subset\mathcal{L}(X_\lambda\mid\lambda\in\Lambda\setminus\Lambda')$
such that $\rho_0(\langle X_\lambda\mid\lambda\in\Lambda'\rangle,L)<\delta$.

Then $X_\lambda$, $\lambda\in\Lambda\setminus\Lambda'$, is an ARFS in $X$.
\end{theorem}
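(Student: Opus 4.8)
The plan is to apply Theorem~\ref{T:criterion_restriction_phi} directly to the reduced family $\widetilde{S}=(X;X_\lambda\mid\lambda\in\Lambda\setminus\Lambda')$, taking as the auxiliary subspace $Y=\langle X_\lambda\mid\lambda\in\Lambda'\rangle$. With this choice the first hypothesis of Theorem~\ref{T:criterion_restriction_phi} --- that for every $\delta>0$ there is a subspace $L\subset\mathcal{L}(X_\lambda\mid\lambda\in\Lambda\setminus\Lambda')$ with $\rho_0(Y,L)<\delta$ --- is literally the assumption made in the statement of Theorem~\ref{T:ARFS_delete}, so nothing needs to be proved there.

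The substance of the argument is to verify the remaining hypothesis, namely that there is an $\varepsilon>0$ with $\sup_{\lambda\in\Lambda\setminus\Lambda'}\|\varphi\upharpoonright_{X_\lambda}\|\geqslant\varepsilon\|\varphi\|$ for every $\varphi\in Y^\bot$. Here I would first invoke the fact that $S=(X;X_\lambda\mid\lambda\in\Lambda)$ is an ARFS, so by Theorem~\ref{T:criterion_ARFS} there is an $\varepsilon>0$ with $\sup_{\lambda\in\Lambda}\|\varphi\upharpoonright_{X_\lambda}\|\geqslant\varepsilon\|\varphi\|$ for all $\varphi\in X^*$. The key observation is then that a continuous functional $\varphi$ belongs to $Y^\bot$ precisely when it vanishes on the closed span $\langle X_\lambda\mid\lambda\in\Lambda'\rangle$, which happens if and only if $\varphi\upharpoonright_{X_\lambda}=0$ for every $\lambda\in\Lambda'$. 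Consequently, for such $\varphi$ all the terms indexed by $\Lambda'$ contribute $0$ to the supremum, and so $\sup_{\lambda\in\Lambda\setminus\Lambda'}\|\varphi\upharpoonright_{X_\lambda}\|=\sup_{\lambda\in\Lambda}\|\varphi\upharpoonright_{X_\lambda}\|\geqslant\varepsilon\|\varphi\|$. This yields the desired inequality with the same $\varepsilon$, restricted to $\varphi\in Y^\bot$.

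With both hypotheses of Theorem~\ref{T:criterion_restriction_phi} checked for the family $\widetilde{S}$, that theorem immediately gives that $X_\lambda$, $\lambda\in\Lambda\setminus\Lambda'$, is an ARFS in $X$. Since the whole argument reduces to a correct bookkeeping match between the hypotheses here and those of Theorem~\ref{T:criterion_restriction_phi}, I do not expect a serious obstacle; the only point requiring care is the identification of $Y^\bot$ with the functionals annihilating each $X_\lambda$, $\lambda\in\Lambda'$, which rests on the elementary fact that a continuous functional vanishing on a set vanishes on its closed linear span.
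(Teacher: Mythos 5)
Your proof is correct and coincides essentially step for step with the paper's own argument: the same choice $Y=\langle X_\lambda\mid\lambda\in\Lambda'\rangle$, the same appeal to Theorem~\ref{T:criterion_ARFS} to obtain $\varepsilon$, the same observation that $\varphi\in Y^{\bot}$ forces $\varphi\upharpoonright_{X_\lambda}=0$ for $\lambda\in\Lambda'$ so the supremum over $\Lambda\setminus\Lambda'$ still dominates $\varepsilon\|\varphi\|$, and the same final application of Theorem~\ref{T:criterion_restriction_phi}. Nothing further is needed.
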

\begin{proof}
To prove the required assertion, we will use Theorem~\ref{T:criterion_restriction_phi}.

Set $Y=\langle X_\lambda\mid\lambda\in\Lambda'\rangle$.
Then for any $\delta>0$ there is a subspace $L\subset\mathcal{L}(X_\lambda\mid\lambda\in\Lambda\setminus\Lambda')$ such that $\rho_0(Y,L)<\delta$.

By Theorem~\ref{T:criterion_ARFS}, there is an $\varepsilon>0$ such that~\eqref{E:ineq_ARFS_sufficient} holds for any $\varphi\in X^*$.
Consider any $\varphi\in Y^\bot$.
Then $\varphi\upharpoonright_{X_\lambda}=0$, $\lambda\in\Lambda'$.
Using~\eqref{E:ineq_ARFS_sufficient}, we get
\begin{equation*}
\sup_{\lambda\in\Lambda\setminus\Lambda'}\|\varphi\upharpoonright_{X_\lambda}\|\geqslant\varepsilon\|\varphi\|.
\end{equation*}
From Theorem~\ref{T:criterion_restriction_phi} it follows that $X_\lambda$, $\lambda\in\Lambda\setminus\Lambda'$, is an ARFS in $X$.
\end{proof}

\begin{theorem}\label{T:ARFS_delete_finite}
Let $X_\lambda$, $\lambda\in\Lambda$, be an ARFS in $X$, and $\Lambda'$ be a subset of $\Lambda$.
Suppose that
\begin{enumerate}
\item
$\Lambda'$ is finite;
\item
$X_\lambda$ is finite dimensional for $\lambda\in\Lambda'$.
\end{enumerate}
Then the family $X_\lambda$, $\lambda\in\Lambda\setminus\Lambda'$, is an ARFS in $X$ if and only if
$\mathcal{L}(X_\lambda\mid\lambda\in\Lambda\setminus\Lambda')$ is dense in $X$.
\end{theorem}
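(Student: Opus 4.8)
The plan is to prove the two implications separately, with the forward direction being essentially immediate and the reverse direction following from the already-established sufficient condition of Theorem~\ref{T:criterion_cofinite_dim}.

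For the necessity ($\Rightarrow$), I would argue that any ARFS has dense linear span. Indeed, if $X_\lambda$, $\lambda\in\Lambda\setminus\Lambda'$, is an ARFS, then every $x\in X$ equals an absolutely convergent series $\sum_{\lambda\in\Lambda\setminus\Lambda'}x_\lambda$ with $x_\lambda\in X_\lambda$; the finite partial sums lie in $\mathcal{L}(X_\lambda\mid\lambda\in\Lambda\setminus\Lambda')$ and converge to $x$ in norm, so this linear span is dense in $X$. This part uses neither hypothesis (1) nor (2).

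For the sufficiency ($\Leftarrow$), set $Y=\langle X_\lambda\mid\lambda\in\Lambda'\rangle$. Since $\Lambda'$ is finite and each $X_\lambda$ with $\lambda\in\Lambda'$ is finite dimensional, $Y$ is a finite dimensional subspace of $X$. I intend to apply Theorem~\ref{T:criterion_cofinite_dim} to the family $X_\lambda$, $\lambda\in\Lambda\setminus\Lambda'$, with this finite dimensional $Y$. The density hypothesis supplies exactly the assumption that $\mathcal{L}(X_\lambda\mid\lambda\in\Lambda\setminus\Lambda')$ is dense in $X$. It remains to verify the inequality~\eqref{E:ineq_cofinite_dim} on $Y^{\bot}$. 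Because $S=(X;X_\lambda\mid\lambda\in\Lambda)$ is an ARFS, Theorem~\ref{T:criterion_ARFS} yields an $\varepsilon>0$ with $\sup_{\lambda\in\Lambda}\|\varphi\upharpoonright_{X_\lambda}\|\geqslant\varepsilon\|\varphi\|$ for all $\varphi\in X^*$. For $\varphi\in Y^{\bot}$ one has $\varphi\upharpoonright_{X_\lambda}=0$ whenever $\lambda\in\Lambda'$ (as $X_\lambda\subset Y$ there), so the supremum over $\Lambda$ coincides with the supremum over $\Lambda\setminus\Lambda'$, giving $\sup_{\lambda\in\Lambda\setminus\Lambda'}\|\varphi\upharpoonright_{X_\lambda}\|\geqslant\varepsilon\|\varphi\|$ for $\varphi\in Y^{\bot}$. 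Theorem~\ref{T:criterion_cofinite_dim} then concludes that $X_\lambda$, $\lambda\in\Lambda\setminus\Lambda'$, is an ARFS in $X$.

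The only place where hypotheses (1) and (2) are genuinely used is in ensuring that $Y$ is finite dimensional, which is precisely what Theorem~\ref{T:criterion_cofinite_dim} demands; the first Remark above shows that this restriction cannot be dropped. Thus I expect no real obstacle beyond correctly identifying $Y$ and checking that functionals annihilating $Y$ annihilate every deleted subspace — a routine verification. One could equally route through Theorem~\ref{T:ARFS_delete}, approximating a basis of the finite dimensional $Y$ by elements of the dense span to produce, for each $\delta>0$, a subspace $L\subset\mathcal{L}(X_\lambda\mid\lambda\in\Lambda\setminus\Lambda')$ with $\rho_0(Y,L)<\delta$; but appealing directly to Theorem~\ref{T:criterion_cofinite_dim} is the cleaner path.
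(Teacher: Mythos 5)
Your proposal is correct and follows essentially the same route as the paper: set $Y=\langle X_\lambda\mid\lambda\in\Lambda'\rangle$ (finite dimensional by hypotheses (1) and (2)), use Theorem~\ref{T:criterion_ARFS} on the full family to get the inequality~\eqref{E:ineq_cofinite_dim} for $\varphi\in Y^{\bot}$, and conclude via Theorem~\ref{T:criterion_cofinite_dim}, with the forward direction being the standard density observation. Your write-up even fills in details the paper leaves implicit (the partial-sums argument for necessity, and why functionals in $Y^{\bot}$ vanish on each deleted $X_\lambda$), so there is nothing to correct.
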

\begin{proof}
Clearly, if $X_\lambda$, $\lambda\in\Lambda\setminus\Lambda'$, is an ARFS in $X$, then
$\mathcal{L}(X_\lambda\mid\lambda\in\Lambda\setminus\Lambda')$ is dense in $X$.

Now suppose that $\mathcal{L}(X_\lambda\mid\lambda\in\Lambda\setminus\Lambda')$ is dense in $X$.
To prove that $X_\lambda$, $\lambda\in\Lambda\setminus\Lambda'$, is an ARFS in $X$ we will use Theorem~\ref{T:criterion_cofinite_dim}.

Set $Y=\langle X_\lambda\mid\lambda\in\Lambda'\rangle$. 
Then $Y$ is a finite dimensional subspace of $X$.

By Theorem~\ref{T:criterion_ARFS}, there is an $\varepsilon>0$ such that~\eqref{E:ineq_ARFS_sufficient} holds for any $\varphi\in X^*$.
Consider any $\varphi\in Y^\bot$.
Then $\varphi\upharpoonright_{X_\lambda}=0$, $\lambda\in\Lambda'$.
Using~\eqref{E:ineq_ARFS_sufficient}, we get
\begin{equation*}
\sup_{\lambda\in\Lambda\setminus\Lambda'}\|\varphi\upharpoonright_{X_\lambda}\|\geqslant\varepsilon\|\varphi\|.
\end{equation*}
From Theorem~\ref{T:criterion_cofinite_dim} it follows that $X_\lambda$, $\lambda\in\Lambda\setminus\Lambda'$, is an ARFS in $X$.
\end{proof}

\section{Families of subspaces spanned by $e^{-\alpha t}$ in $C_\infty[0,\infty)$}\label{S:exponents}

Let us denote by $C_\infty[0,\infty)$ the set of all continuous functions $f:[0,\infty)\to\mathbb{K}$
such that $\lim_{t\to\infty}f(t)=0$.
Set $\|f\|=\sup_{t\in[0,\infty)}|f(t)|$, $f\in C_\infty[0,\infty)$.
Clearly, $C_\infty[0,\infty)$ is a Banach space with respect to the norm $\|\cdot\|$.

\subsection{}\label{SS:exponents_ARF}
The system of elements $e^{-\lambda t}$, $\lambda>0$, is not an absolutely representing system in $C_\infty[0,\infty)$.

Indeed, suppose that
\begin{equation}\label{E:exponents}
f(t)=\sum_{j=1}^\infty a_j e^{-\lambda_j t},\quad\sum_{j=1}^\infty\|a_j e^{-\lambda_j t}\|=\sum_{j=1}^\infty|a_j|<\infty
\end{equation}
(the series converges in $\|\cdot\|$).
Define $F(z)=\sum_{j=1}^\infty a_j e^{-\lambda_j z}$, $\mathrm{Re}(z)\geqslant 0$.
Clearly, $F$ is analytic in $\mathrm{Re}(z)>0$ and continuous in $\mathrm{Re}(z)\geqslant 0$.
Moreover, $F(t)=f(t)$ for $t\in[0,\infty)$.

Conclusion: if $f\in C_\infty[0,\infty)$ can be represented in the form~\eqref{E:exponents},
then there exists a function $F(z)$, $\mathrm{Re}(z)\geqslant 0$, such that
\begin{enumerate}
\item
$F$ is an extension of $f$, that is, $F(t)=f(t)$, $t\in[0,\infty)$;
\item
$F$ is analytic in $\mathrm{Re}(z)>0$ and continuous in $\mathrm{Re}(z)\geqslant 0$.
\end{enumerate}

But not every function $f\in C_\infty[0,\infty)$ possesses this property.
Hence, the system $e^{-\lambda t}$, $\lambda>0$, is not an absolutely representing system in $C_\infty[0,\infty)$.

\subsection{Formulation of the problem and main results}\label{SS:exponents_results}

It is natural to study families of subspaces spanned by $e^{-\alpha t}$ in $C_\infty[0,\infty)$.

Let us introduce some notation.
Define $I_n=\{1,\ldots,n\}$, $n\in\mathbb{N}$.
We also set $I_\infty=\{1,2,\ldots\}$.
Let $n\in\mathbb{N}\cup\{\infty\}$, and let $\alpha_k$, $k\in I_n$, be a sequence of pairwise distinct positive numbers.
Define $Exp(\alpha_k\mid k\in I_n)$ to be the subspace of $C_\infty[0,\infty)$ spanned by $e^{-\alpha_k t}$, $k\in I_n$.

Let $\Lambda$ be a nonempty set.
Suppose that $n(\lambda)\in\mathbb{N}\cup\{\infty\}$, $\lambda\in\Lambda$.
Let $\alpha(\lambda,k)$, $k\in I_{n(\lambda)}$, be pairwise distinct positive numbers, $\lambda\in\Lambda$.
In what follows we assume that
\begin{enumerate}
\item
if $n(\lambda)\in\mathbb{N}$, then $\alpha(\lambda,1)<\ldots<\alpha(\lambda,n(\lambda))$;
\item
if $n(\lambda)=\infty$, then $\alpha(\lambda,1)<\alpha(\lambda,2)<\ldots$.
\end{enumerate}

In this section we study the following question:
\begin{center}
\emph{when the system of subspaces $X_\lambda=Exp(\alpha(\lambda,k)\mid k\in I_{n(\lambda)})$, $\lambda\in\Lambda$, is an ARFS in $C_\infty[0,\infty)$?}
\end{center}

To formulate our first result, we need a few auxiliary definitions.
Define
\begin{equation*}
\beta(\lambda)=\sum_{k\in I_{n(\lambda)}}\frac{1}{\alpha(\lambda,k)}\in(0,\infty],\quad\lambda\in\Lambda.
\end{equation*}

\begin{remark}
The subspace $X_\lambda$ is equal to $C_\infty[0,\infty)$ iff $\beta(\lambda)=\infty$.

Indeed, denote by $C_0[0,1]$ the set of all continuous functions $g:[0,1]\to\mathbb{K}$ such that $g(0)=0$.
Set $\|g\|=\max_{x\in[0,1]}|g(x)|$, $g\in C_0[0,1]$.
Suppose that $0<\alpha_1<\alpha_2<\ldots$.
From Muntz's theorem (see, e.g., \cite[Section 4.2]{Borwein_Erdeliy}) it follows that the linear span of
$x^{\alpha_k}$, $k\geqslant 1$, is dense in $C_0[0,1]$ iff $\sum_{k=1}^{\infty}1/\alpha_k=\infty$.
Using the substitution $x=e^{-t}$, $t\in[0,\infty)$, we get the following result:
the linear span of $e^{-\alpha_k t}$, $k\geqslant 1$, is dense in $C_\infty[0,\infty)$ iff $\sum_{k=1}^\infty 1/\alpha_k=\infty$.
Hence, $X_\lambda=C_\infty[0,\infty)$ iff $\beta(\lambda)=\infty$.
\end{remark}

In what follows we assume that $\beta(\lambda)<\infty$, $\lambda\in\Lambda$.

The family of numbers $\beta(\lambda)$, $\lambda\in\Lambda$, is said to be bounded if there is a $C$ such that $\beta(\lambda)\leqslant C$, $\lambda\in\Lambda$.
Otherwise the family $\beta(\lambda)$, $\lambda\in\Lambda$, is said to be unbounded.

\begin{theorem}\label{T:exponents_ARFS}
Suppose that $\inf_{\lambda\in\Lambda}\alpha(\lambda,1)>0$.
If the family $\beta(\lambda)$, $\lambda\in\Lambda$, is unbounded, then $X_\lambda$, $\lambda\in\Lambda$, is an ARFS in $C_\infty[0,\infty)$.
\end{theorem}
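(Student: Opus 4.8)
The plan is to apply the dual criterion of Theorem~\ref{T:criterion_ARFS}: it suffices to produce a single \(\varepsilon>0\) with \(\sup_{\lambda}\|\varphi\upharpoonright_{X_\lambda}\|\ge\varepsilon\|\varphi\|\) for every \(\varphi\in C_\infty[0,\infty)^*\). First I would identify the dual space: by the Riesz representation theorem \(C_\infty[0,\infty)^*\) is the space of finite regular Borel measures \(\mu\) on \([0,\infty)\) with \(\|\varphi\|=\|\mu\|\) (total variation), and \(\varphi(e^{-\alpha t})=\widehat{\mu}(\alpha):=\int_0^\infty e^{-\alpha t}\,d\mu(t)\). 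Thus \(\|\varphi\upharpoonright_{X_\lambda}\|\) is exactly the norm of the restriction of \(\mu\) to the M\"untz space \(X_\lambda\), and the substitution \(x=e^{-t}\) recasts the whole problem in \(C_0[0,1]\): the subspaces \(\mathrm{span}(x^{\alpha(\lambda,k)})\), with all exponents \(\ge a:=\inf_\lambda\alpha(\lambda,1)>0\) and \(\sum_k 1/\alpha(\lambda,k)=\beta(\lambda)\) unbounded, must norm every measure up to a fixed fraction.

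I would then reduce the criterion to a ``one--step approximation'' property: there is \(\theta<1\) such that every \(h\in C_\infty[0,\infty)\) with \(\|h\|=1\) satisfies \(d(h,X_\lambda)\le\theta\) for some \(\lambda\). Granting this, one peels off approximants \(f_m\in X_{\lambda(m)}\) with \(\|h-f_1-\dots-f_m\|\le\theta^m\) and \(\|f_m\|\le(1+\theta)\theta^{m-1}\), then regroups them by index exactly as in Example~\ref{EX:dense}, obtaining an absolutely convergent representation. So the heart of the matter is to realise every unit function as being within a fixed distance \(\theta<1\) of a \emph{single} subspace of the family.

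The mechanism I would exploit is that a M\"untz subspace with large \(\beta(\lambda)\) is almost dense: by the Clarkson--Erd\H{o}s/M\"untz theory underlying \cite{Borwein_Erdeliy} the uniform closure of \(\mathrm{span}(x^{\alpha(\lambda,k)})\) fills out \(C_0[0,1]\) as \(\beta(\lambda)\to\infty\), and quantitative M\"untz--Jackson estimates bound the approximation error in terms of \(\beta(\lambda)\), the lower bound \(a\) on the exponents, and the modulus of continuity of the target. The assumption \(a>0\) is what keeps these constants uniform (it fails for the plain exponential system of \S\ref{SS:exponents_ARF}, where \(\inf\alpha=0\)), and the unboundedness of \(\beta\) is what lets me choose, for each prescribed target, a subspace whose guaranteed error is below \(\theta\). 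I would organise this by contradiction: if the criterion fails, Theorem~\ref{T:criterion_ARFS} yields \(\varphi_n\) with \(\|\varphi_n\|=1\) and \(\sup_\lambda\|\varphi_n\upharpoonright_{X_\lambda}\|\to0\); passing to the Laplace transforms \(F_n(z)=\widehat{\mu_n}(z)\), which are analytic and bounded by \(1\) on \(\mathrm{Re}(z)>0\), a normal--families argument extracts a locally uniform limit \(F\), and the vanishing of \(F\) on the exponent set (whose reciprocals sum to \(\infty\) since \(\beta\) is unbounded) forces \(F\equiv0\), i.e.\ \(\mu_n\to0\) weak\(^*\).

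The main obstacle is precisely the uniformity of the constant at this last step. Weak\(^*\) convergence \(\mu_n\to0\) is consistent with \(\|\mu_n\|=1\), since the total--variation norm is only weak\(^*\) lower semicontinuous; so pointwise smallness of \(\widehat{\mu_n}\) on the exponents is not enough, and one must use the \emph{full} hypothesis \(\sup_\lambda\|\varphi_n\upharpoonright_{X_\lambda}\|\to0\)—testing \(\mu_n\) against arbitrary unit-norm M\"untz polynomials inside a single \(X_\lambda\)—to extract a genuine bound of the form \(\|\mu_n\|\le C\,\sup_\lambda\|\varphi_n\upharpoonright_{X_\lambda}\|+o(1)\). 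Because any proper subspace \(X_\lambda\neq X\) is annihilated by some unit functional (for which \(\|\varphi\upharpoonright_{X_\lambda}\|=0\)), the subspace achieving the fraction \(\varepsilon\) must be chosen depending on \(\varphi\); the unboundedness of \(\beta\) supplies arbitrarily rich candidates and \(a>0\) keeps the M\"untz constants uniform. Establishing this uniform quantitative M\"untz estimate—controlling the inevitable growth of the sup-norms of the extremal M\"untz polynomials as \(\beta\to\infty\)—is the technical core of the argument.
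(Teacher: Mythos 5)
Your overall frame is the right one, and it matches the paper's: the paper also deduces the theorem from the statement that $\bigcup_{\lambda\in\Lambda}X_\lambda$ is dense in $C_\infty[0,\infty)$ (Theorem~\ref{T:exponents_ARFS}$'$) together with the peeling argument of Example~\ref{EX:dense}, which is exactly your ``one-step approximation'' reduction. The genuine gap is that you never establish the approximation property itself: you delegate it to ``Clarkson--Erd\H{o}s/M\"untz theory'' and unspecified ``M\"untz--Jackson estimates'', and you concede yourself that this uniform estimate is ``the technical core''. That core is precisely what the paper supplies, and by a much more elementary and more targeted device than what you reach for. Von Golitschek's construction (Lemma~\ref{L:estimate_ARFS}: iterate $f_k(t)=(\alpha_k-\alpha)\int_0^t e^{-\alpha_k(t-v)}f_{k-1}(v)\,dv$ starting from $f_0=e^{-\alpha t}$) gives the explicit bound $d(e^{-\alpha t},\langle e^{-\alpha_k t}\mid k=1,\ldots,N\rangle)\leqslant\prod_{k=1}^N|1-\alpha/\alpha_k|\leqslant\exp\bigl(-\alpha\sum_{k=1}^N 1/\alpha_k\bigr)$ whenever all $\alpha_k\geqslant\alpha$ (Corollary~\ref{C:estimate_ARFS}). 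With $a=\inf_\lambda\alpha(\lambda,1)>0$, every exponent of every $X_\lambda$ is $\geqslant a$, so for $\alpha\in(0,a]$ one gets $d(e^{-\alpha t},X_\lambda)\leqslant e^{-\alpha\beta_N(\lambda)}$, which is arbitrarily small since $\beta(\lambda)$ is unbounded; a single $\lambda$ with large $\beta(\lambda)$ then absorbs any fixed finite combination $\sum_j c_j e^{-\gamma_j t}$, $\gamma_j\in(0,a]$, and since the span of these exponentials is dense in $C_\infty[0,\infty)$, the union is dense. This is the simplification you missed: one only needs to approximate the \emph{exponentials} $e^{-\alpha t}$ with $\alpha\leqslant a$, not arbitrary unit vectors, so no Jackson-type bound involving a modulus of continuity is required, and no fixed $\theta<1$ has to be produced uniformly in advance --- density hands you every $\theta$.

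Your alternative route through Theorem~\ref{T:criterion_ARFS}, Laplace transforms and normal families is not salvageable as described, and for exactly the reason you flag: weak$^*$ convergence $\mu_n\to 0$ is compatible with $\|\mu_n\|=1$. Indeed $\mu_n=\delta_{t_n}$ with $t_n\to\infty$ has $\widehat{\mu_n}(z)=e^{-zt_n}\to 0$ locally uniformly on $\mathrm{Re}(z)>0$ while the total variation stays $1$, so the vanishing of a normal-families limit of the transforms can never, by itself, produce a contradiction. Any completion of that route must import a quantitative primal estimate of the kind above (lower-bounding how well unit-norm M\"untz polynomials in a single $X_\lambda$ test $\mu_n$), i.e.\ exactly the content you left unproven. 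As it stands, the proposal is a correct reduction plus an accurate diagnosis of what is missing, but it does not prove the theorem.
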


\begin{remark}
Theorem~\ref{T:exponents_ARFS} is not valid without the assumption that $\inf_{\lambda\in\Lambda}\alpha(\lambda,1)>0$.
To see this, we consider the following example.
Set $\Lambda=\mathbb{N}$ and let $n(\lambda)=1$, $\lambda\in\mathbb{N}$.
Let $\alpha(\lambda,1)=1/\lambda$, $\lambda\in\mathbb{N}$.
Then $\beta(\lambda)=\lambda$ is unbounded.
However, $X_\lambda$, $\lambda\in\mathbb{N}$, is not an ARFS in $C_\infty[0,\infty)$ (see Section~\ref{SS:exponents_ARF}).
\end{remark}

We will prove a stronger result.
Theorem~\ref{T:exponents_ARFS} is a direct consequence of the following theorem (see Example~\ref{EX:dense}).
\vspace{1pt}
\begin{flushleft}
\textbf{Theorem~\ref{T:exponents_ARFS}$'$.}
\emph{Suppose that $\inf_{\lambda\in\Lambda}\alpha(\lambda,1)>0$.
If the family $\beta(\lambda)$, $\lambda\in\Lambda$, is unbounded, then $\bigcup_{\lambda\in\Lambda}X_\lambda$ is dense in $C_\infty[0,\infty)$.}
\end{flushleft}
\vspace{1pt}

To formulate our second result, we need a few auxiliary notions.
Let $n\in\mathbb{N}\cup\{\infty\}$, and let $\alpha_k$, $k\in I_n$, be positive numbers.
If $n\in\mathbb{N}$, then we assume that $\alpha_1<\ldots<\alpha_n$; if $n=\infty$, then we assume that $\alpha_1<\alpha_2<\ldots$.
Let $\Delta>0$.
We will say that the sequence $\alpha_k$, $k\in I_n$, satisfies the $\Delta$-gap condition if the following holds:
\begin{enumerate}
\item
if $n\in\mathbb{N}$ and $n\geqslant 2$, then $\alpha_{k+1}-\alpha_k\geqslant\Delta$, $k=1,\ldots,n-1$;
\item
if $n=\infty$, then $\alpha_{k+1}-\alpha_{k}\geqslant\Delta$, $k=1,2,\ldots$.
\end{enumerate}

\begin{theorem}\label{T:exponents_not_ARFS}
Suppose that there exists a $\Delta>0$ such that
the sequence $\alpha(\lambda,k)$, $k\in I_{n(\lambda)}$, satisfies the $\Delta$-gap condition for any $\lambda\in\Lambda$.
If the family $\beta(\lambda)$, $\lambda\in\Lambda$, is bounded, then $X_\lambda$, $\lambda\in\Lambda$, is not an ARFS in $C_\infty[0,\infty)$.
\end{theorem}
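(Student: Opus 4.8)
The plan is to apply the dual criterion of Theorem~\ref{T:criterion_ARFS}. To show that $S=(C_\infty[0,\infty);X_\lambda\mid\lambda\in\Lambda)$ is \emph{not} an ARFS it suffices to exhibit functionals $\varphi\in(C_\infty[0,\infty))^*$ with $\|\varphi\|=1$ whose restrictions to all the $X_\lambda$ are arbitrarily uniformly small; that is, a family with $\|\varphi\|=1$ and $\sup_{\lambda\in\Lambda}\|\varphi\upharpoonright_{X_\lambda}\|\to 0$. I would use the point evaluations $\delta_T\in(C_\infty[0,\infty))^*$, $\delta_T(f)=f(T)$, which satisfy $\|\delta_T\|=1$, and show $\sup_{\lambda}\|\delta_T\upharpoonright_{X_\lambda}\|\to 0$ as $T\to\infty$. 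First I would record the structural consequences of the hypotheses. Since $1/\alpha(\lambda,1)\leqslant\beta(\lambda)\leqslant C$, the smallest exponent in each group satisfies $\alpha(\lambda,1)\geqslant 1/C$; together with the $\Delta$-gap condition this gives $\alpha(\lambda,k)\geqslant 1/C+(k-1)\Delta$, so that $\beta(\lambda)\geqslant\sum_{k}(1/C+(k-1)\Delta)^{-1}$. Comparing with $\beta(\lambda)\leqslant C$ bounds the number of summands, $n(\lambda)\leqslant N$ for a finite $N=N(\Delta,C)$. Thus all $X_\lambda$ are uniformly finite dimensional, spanned by exponentials whose exponents are $\geqslant 1/C$ with consecutive gaps $\geqslant\Delta$.

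The heart of the matter is the following uniform estimate, which I would isolate as a lemma: there is a constant $K=K(\Delta,C)$ such that for every $\lambda$ and every $g=\sum_{k}c_k e^{-\alpha(\lambda,k)t}\in X_\lambda$ one has $\sum_{k}|c_k|e^{-\alpha(\lambda,k)}\leqslant K\|g\|$. Granting this, for $T\geqslant 1$ the lower bound $\alpha(\lambda,k)\geqslant 1/C$ gives
\[
|g(T)|\leqslant\sum_{k}|c_k|e^{-\alpha(\lambda,k)T}\leqslant e^{-(T-1)/C}\sum_{k}|c_k|e^{-\alpha(\lambda,k)}\leqslant K e^{-(T-1)/C}\|g\|,
\]
so that $\|\delta_T\upharpoonright_{X_\lambda}\|\leqslant K e^{-(T-1)/C}$ for every $\lambda$. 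Hence $\sup_{\lambda}\|\delta_T\upharpoonright_{X_\lambda}\|\to 0$, and Theorem~\ref{T:criterion_ARFS} yields that $S$ is not an ARFS. Equivalently, the lemma shows that for any absolutely convergent representation $g=\sum_{\lambda}g_\lambda$ the analytic extensions $G_\lambda(z)=\sum_k c_{\lambda,k}e^{-\alpha(\lambda,k)z}$ of the summands converge, uniformly on each half-plane $\mathrm{Re}(z)\geqslant a>0$, to an analytic extension of $g$ to $\mathrm{Re}(z)>0$, which is impossible when $g$ fails to be real-analytic on $(0,\infty)$; this is the same obstruction exploited in Subsection~\ref{SS:exponents_ARF}.

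The main obstacle is the lemma itself, a Müntz-type coefficient inequality in which the gap condition is indispensable. Substituting $x=e^{-t}$ turns $g$ into the Müntz polynomial $G(x)=\sum_k c_k x^{\alpha(\lambda,k)}$ on $[0,1]$ with $\|G\|_{C[0,1]}=\|g\|$, and the claim becomes $\sum_k|c_k|x_0^{\alpha(\lambda,k)}\leqslant K\|G\|_{C[0,1]}$ with $x_0=e^{-1}$. The gap cannot be dropped: for $g=e^{-\alpha_1 t}-e^{-\alpha_2 t}$ with $\alpha_2-\alpha_1\to 0$ one has $\|g\|\to 0$ while the weighted coefficient sum stays bounded away from $0$, so the inequality fails. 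I would prove the lemma by induction on the number of terms, peeling off the top exponent by means of the operator $x\tfrac{d}{dx}-\alpha_n$, which annihilates $x^{\alpha_n}$ and multiplies each remaining coefficient by $\alpha_k-\alpha_n$ (of modulus $\geqslant\Delta$, by the gap), while the weight $x_0^{\alpha_k}$ dominates the (at most polynomial in the exponents, via a Markov–Newman-type bound) growth that the coefficients may exhibit as the exponents become large; alternatively one may invoke coefficient estimates for Müntz systems with gaps from \cite{Borwein_Erdeliy}. The delicate point throughout is to keep the constants genuinely uniform in $\lambda$, in particular independent of the size of the exponents $\alpha(\lambda,k)$, which is precisely where the exponential weight must be carried through the induction rather than discarded.
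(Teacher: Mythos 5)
Your overall strategy is exactly the paper's: use the dual criterion (Theorem~\ref{T:criterion_ARFS}), take the point evaluations as the norming functionals (the paper's $p_t$), and reduce everything to a uniform coefficient estimate for $\Delta$-separated exponents with $\sum_k 1/\alpha_k\leqslant M$ (the paper's Lemma~\ref{L:estimate_coef_f}, which yields Theorem~\ref{T:estimate_f(t)} and Corollary~\ref{C:estimate_sup_p_t}). The gap is in your key lemma itself: the bound $\sum_k|c_k|e^{-\alpha(\lambda,k)}\leqslant K\|g\|$, i.e.\ $|c_k|\leqslant Ke^{\alpha_k}\|g\|$ with weight exponent $b=1$, is \emph{false} in general, so no induction or citation can establish it. Concretely, fix $\Delta\leqslant 1$ and for each $n\in\mathbb{N}$ take the exponents $\alpha_k=(n+k-1)\Delta$, $k=1,\ldots,n+1$: the gaps are exactly $\Delta$ and $\sum_k 1/\alpha_k=\frac{1}{\Delta}\sum_{m=n}^{2n}\frac1m\leqslant\frac{1+\ln 2}{\Delta}=:M$ uniformly in $n$. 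Let $T_n$ be the Chebyshev polynomial and put
\begin{equation*}
g_n(t)=e^{-n\Delta t}\,T_n\bigl(4e^{-\Delta t}-3\bigr)\in Exp\bigl(\alpha_k\mid k\in I_{n+1}\bigr).
\end{equation*}
Substituting $u=e^{-\Delta t}$, $g_n$ becomes $u^nT_n(4u-3)$. On $u\in[1/2,1]$ both factors have modulus at most $1$; on $u\in(0,1/2]$ one has $|T_n(4u-3)|=\cosh\bigl(n\,\mathrm{arccosh}(3-4u)\bigr)\leqslant e^{n\,\mathrm{arccosh}(3-4u)}$, and the function $h(u)=\ln u+\mathrm{arccosh}(3-4u)$ attains its maximum on $(0,1/2]$ at $u=1/3$, where $h(1/3)=-\ln 3+\mathrm{arccosh}(5/3)=-\ln3+\ln3=0$; hence $u^n|T_n(4u-3)|\leqslant e^{nh(u)}\leqslant 1$ there too. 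Since $g_n(0)=T_n(1)=1$, we get $\|g_n\|=1$. But the coefficient of $e^{-2n\Delta t}$ in $g_n$ is the leading coefficient of $T_n(4u-3)$, namely $2^{n-1}4^n=8^n/2$, so the single term $k=n+1$ of your weighted sum equals $\tfrac12\,8^n e^{-2n\Delta}=\tfrac12 e^{n(3\ln 2-2\Delta)}\to\infty$ whenever $\Delta<\tfrac32\ln 2\approx 1.04$. Thus no constant $K(\Delta,C)$ exists, and the proof sketch (peeling off exponents, or "coefficient estimates with gaps") cannot be completed, because the statement it targets is wrong: the admissible coefficient growth genuinely depends on $\Delta$ and $M$, not just through a multiplicative constant.

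The repair is precisely what the paper does: prove the coefficient bound with weight $e^{-b\alpha_k}$ where $b=b(\Delta,M)=4M+\frac{5\ln 2}{\Delta}+1$ (Lemma~\ref{L:estimate_coef_f}), obtained not by induction but by passing, after the substitution $x=e^{-t}$, from the sup norm down to the $L_2[0,1]$ norm, invoking the exact M\"untz distance formula (Lemma~\ref{L:distance}), and estimating the resulting product $\prod_{j\neq k}\frac{\alpha_k+\alpha_j+1}{|\alpha_k-\alpha_j|}$ by $\exp\bigl((4M+\frac{5\ln2}{\Delta})(\alpha_k+\frac12)+3\ln 2\bigr)$ using the gap condition (Lemma~\ref{L:estimate_product_nu}). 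With this weaker lemma your own deduction goes through verbatim with $1$ replaced by $b$: for $T\geqslant b+1$, $|g(T)|\leqslant\sum_k|c_k|e^{-\alpha_k T}\leqslant e^{-m(T-b)}\sum_k|c_k|e^{-b\alpha_k}$ with $m=1/M$, so $\sup_\lambda\|\delta_T\upharpoonright_{X_\lambda}\|\to 0$ as $T\to\infty$ and Theorem~\ref{T:criterion_ARFS} concludes. Your preliminary observations ($\alpha(\lambda,1)\geqslant 1/C$, hence $n(\lambda)$ uniformly bounded, so every element of $X_\lambda$ is a finite exponential sum) are correct and do align with what the paper implicitly uses; the sole, but essential, defect is the constant $b=1$ in the weight.
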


From Theorem~\ref{T:criterion_ARFS} it follows that $X_\lambda$, $\lambda\in\Lambda$, is not an ARFS in $C_\infty[0,\infty)$ iff
there exists a sequence $\varphi_n\in(C_\infty[0,\infty))^*$, $n\geqslant 1$, such that
\begin{equation}\label{E:condition_varphi}
\|\varphi_n\|=1,\,n\geqslant 1, \quad\text{and}\quad
\sup_{\lambda\in\Lambda}\|\varphi_n\upharpoonright_{X_\lambda}\|\to 0 \quad\text{as}\quad n\to\infty.
\end{equation}

Suppose
\begin{enumerate}
\item
there exists a $\Delta>0$ such that the sequence $\alpha(\lambda,k)$, $k\in I_{n(\lambda)}$, satisfies the $\Delta$-gap condition for any $\lambda\in\Lambda$;
\item
the family $\beta(\lambda)$, $\lambda\in\Lambda$, is bounded.
\end{enumerate}
We provide a "natural" sequence $\varphi_n$, $n\geqslant 1$, which satisfies~\eqref{E:condition_varphi}.
For $t\geqslant 0$, define $p_t:C_\infty[0,\infty)\to\mathbb{K}$ by
\begin{equation*}
p_t(f)=f(t),\quad f\in C_\infty[0,\infty).
\end{equation*}
Clearly, $p_t\in (C_\infty[0,\infty))^*$ and $\|p_t\|=1$, $t\geqslant 0$.
We claim that
\begin{equation*}
\sup_{\lambda\in\Lambda}\|p_t\upharpoonright_{X_\lambda}\|\to 0\quad\text{as}\quad t\to\infty.
\end{equation*}
This is a direct consequence of the following result (see Corollary~\ref{C:estimate_sup_p_t}).

\begin{theorem}\label{T:estimate_f(t)}
Let $n\in\mathbb{N}\cup\{\infty\}$, and let $\alpha_k$, $k\in I_n$, be positive numbers.
If $n\in\mathbb{N}$, then we assume that $\alpha_1<\ldots<\alpha_n$; if $n=\infty$, then we assume that $\alpha_1<\alpha_2<\ldots$.
Suppose $\Delta,M>0$ are such that
\begin{enumerate}
\item
the sequence $\alpha_k$, $k\in I_n$, satisfies the $\Delta$-gap condition;
\item
$\sum_{k\in I_n}1/\alpha_k\leqslant M$.
\end{enumerate}
Set $m=1/M$ and
\begin{equation*}
c=c(\Delta,M)=M\exp\left(2M+\frac{2}{M}+\frac{5\ln2}{2\Delta}+\frac{5\ln2}{\Delta M}+3\ln2+3\right).
\end{equation*}
Then
\begin{equation*}
|f(t)|\leqslant ce^{-mt}\|f\|,\quad t\geqslant 4M+\frac{5\ln2}{\Delta}+2,
\end{equation*}
for any $f\in Exp(\alpha_k\mid k\in I_n)$.
\end{theorem}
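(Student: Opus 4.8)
The plan is to fix $f\in Exp(\alpha_k\mid k\in I_n)$, normalize so that $\|f\|=1$, and work with the expansion $f(t)=\sum_{k\in I_n}a_k e^{-\alpha_k t}$. The first point to record is that $m=1/M\leqslant\alpha_1$: since $1/\alpha_1\leqslant\sum_{k\in I_n}1/\alpha_k\leqslant M$, the target rate $m$ is no larger than the slowest genuine decay rate $\alpha_1$, so $e^{-mt}$ decays no faster than any individual term. Thus the difficulty is not the decay itself but producing a pointwise bound by $\|f\|$ with constants depending only on $\Delta$ and $M$, and in particular independent of $n$, so that the case $n=\infty$ will follow from the finite case by approximation. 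The role of the $\Delta$-gap condition is precisely to prevent near-cancellation between terms, which is exactly what would otherwise make $|a_k|$ arbitrarily large relative to $\|f\|$.

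The heart of the argument is a coefficient estimate of the shape $|a_j|\leqslant C(\Delta,M)\,e^{2M\alpha_j}\|f\|$. I would obtain it by constructing, for each $j$, a finite measure (or $L_1$-density) $\mu_j$ on $[0,\infty)$ biorthogonal to the system, i.e. $\int_0^\infty e^{-\alpha_l t}\,d\mu_j(t)=\delta_{jl}$, so that $a_j=\int_0^\infty f\,d\mu_j$ and hence $|a_j|\leqslant\|\mu_j\|\,\|f\|$. Passing to the Laplace transform, $\widehat{\mu}_j$ must interpolate the values $\delta_{jl}$ at the nodes $\alpha_l$, and the natural Blaschke-type choice forces $\|\mu_j\|$ to be governed by the product $\prod_{l\neq j}\frac{\alpha_l+\alpha_j}{|\alpha_l-\alpha_j|}$. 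Taking logarithms, the terms with $\alpha_l$ large contribute $\sum_l 2\alpha_j/\alpha_l\leqslant 2M\alpha_j$ (this is where $\sum_k 1/\alpha_k\leqslant M$ enters and produces the factor $e^{2M\alpha_j}$), while the finitely many nearby and smaller exponents are controlled through the separation $|\alpha_l-\alpha_j|\geqslant|l-j|\Delta$ and the counting bound below.

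With the coefficient bound in hand I would assemble the estimate by the triangle inequality $|f(t)|\leqslant\sum_{k\in I_n}|a_k|\,e^{-\alpha_k t}$, which after the coefficient estimate becomes a sum of terms $\lesssim e^{-\alpha_k(t-2M)}$. This explains why the threshold must exceed $2M$: only for $t$ safely past $2M$ does the factor $e^{2M\alpha_k}$ get beaten by $e^{-\alpha_k t}$. The gap condition gives $\alpha_k\geqslant\alpha_1+(k-1)\Delta\geqslant(k-1)\Delta$, so at most $1+A/\Delta$ indices satisfy $\alpha_k\leqslant A$; the head is handled using $\alpha_k\geqslant\alpha_1\geqslant m$ together with $Mm=1$, and the tail is dominated by the geometric series $\sum_k e^{-(k-1)\Delta(t-2M)}$. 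Requiring $t\geqslant 4M+\tfrac{5\ln2}{\Delta}+2$ forces the geometric ratio $e^{-\Delta(t-2M)}$ comfortably below $1/2$, which is what produces the dyadic contributions $\tfrac{5\ln2}{2\Delta}$, $\tfrac{5\ln2}{\Delta M}$ and the additive $M$- and $1/M$-terms in the exponent defining $c(\Delta,M)$.

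The step I expect to be the main obstacle is the coefficient estimate: making $\|\mu_j\|$ fully explicit and, crucially, uniform in $n$, which requires tracking every constant rather than arguing by mere existence. Once the finite-$n$ bound is established with $n$-independent $c$ and $m$, the case $n=\infty$ follows by approximation: a general $f\in Exp(\alpha_k\mid k\in I_\infty)$ is the $\|\cdot\|$-limit of finite linear combinations $g_N$, each using only finitely many of the $\alpha_k$; these subsystems inherit both the $\Delta$-gap condition and the bound $\sum 1/\alpha_k\leqslant M$, so the finite estimate applies to every $g_N$ with the same $c$ and $m$, and letting $N\to\infty$ (using $g_N(t)\to f(t)$ and $\|g_N\|\to\|f\|$) yields the stated inequality for $f$.
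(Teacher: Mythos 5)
Your overall architecture (an $n$-independent coefficient bound $|a_k|\leqslant a\,e^{b\alpha_k}\|f\|$, then the triangle inequality applied to $\sum_k|a_k|e^{-\alpha_k t}$, then a limiting argument for $n=\infty$) is exactly the architecture of the paper's proof, and your limiting argument is correct and fills in a step the paper leaves implicit. The genuine gap is the step you yourself flag as ``the main obstacle'': the coefficient estimate is asserted, not proved, and the justification you sketch for it does not go through. You claim that a biorthogonal measure $\mu_j$ with $\int_0^\infty e^{-\alpha_l t}\,d\mu_j=\delta_{jl}$ can be chosen with $\|\mu_j\|$ ``governed by'' $\prod_{l\neq j}\frac{\alpha_l+\alpha_j}{|\alpha_l-\alpha_j|}$. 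But by Hahn--Banach and Riesz representation, the smallest possible $\|\mu_j\|$ is exactly $1/d\bigl(e^{-\alpha_j t},\overline{\langle e^{-\alpha_l t}\mid l\neq j\rangle}\bigr)$, the distance taken in the sup norm on $[0,\infty)$; so your claim \emph{is} the quantitative distance bound that constitutes the entire content of the theorem, and it cannot be obtained by pointing at ``the natural Blaschke-type choice''. Concretely, the rational interpolant $H_j(s)=\frac{2\alpha_j}{s+\alpha_j}\,\frac{\prod_{l\neq j}(s-\alpha_l)/(s+\alpha_l)}{\prod_{l\neq j}(\alpha_j-\alpha_l)/(\alpha_j+\alpha_l)}$ is indeed the Laplace transform of an $L_1(0,\infty)$ kernel $h_j$, but unimodularity of Blaschke factors on the imaginary axis gives only $\|h_j\|_{L_1}\geqslant\sup_y|H_j(iy)|=2\prod_{l\neq j}\frac{\alpha_j+\alpha_l}{|\alpha_j-\alpha_l|}$: the product is a \emph{lower} bound for the quantity you need to bound from \emph{above}, and no upper bound uniform in $n$ follows (factor-by-factor convolution yields bounds of order $3^n$). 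The paper closes this gap by elementary explicit means: the substitution $x=e^{-t}$, the inequality $\|g\|_{L_2[0,1]}\leqslant\|f\|$, the exact M\"untz-type $L_2[0,1]$ distance formula (Lemma~\ref{L:distance}), and the product estimate of Lemma~\ref{L:estimate_product_nu}.

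There is also a quantitative error in the form of your asserted bound $|a_j|\leqslant C(\Delta,M)e^{2M\alpha_j}\|f\|$. The exponents lying within a factor of $2$ of $\alpha_j$ are ``finitely many'' but not uniformly bounded in number: there can be as many as roughly $\min\{M\alpha_j,\alpha_j/\Delta\}$ of them, and when they are packed at spacing $\Delta$ their contribution to the product grows exponentially in $\alpha_j$ at a rate that necessarily involves $\Delta$ (for $M\Delta$ small it exceeds $e^{2M\alpha_j}$ by an unbounded factor), so it cannot be absorbed into $C(\Delta,M)$. This is precisely why the paper's coefficient bound (Lemma~\ref{L:estimate_coef_f}) has exponent $b\alpha_k$ with $b=4M+\frac{5\ln2}{\Delta}+1$, and it is the real source of the terms $\frac{5\ln2}{\Delta}$ in the threshold $t\geqslant 4M+\frac{5\ln2}{\Delta}+2$ and $\frac{5\ln2}{2\Delta}$, $\frac{5\ln2}{\Delta M}$ in $c(\Delta,M)$; your attribution of these constants to forcing the geometric ratio $e^{-\Delta(t-2M)}$ below $1/2$ is inconsistent (that would only require $t>2M+\frac{\ln2}{\Delta}$). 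Your tail summation via the geometric series would work once correct coefficient bounds are in hand, but note that the paper instead uses $e^{-\alpha_k}\leqslant 1/(e\alpha_k)$ together with $\sum_k 1/\alpha_k\leqslant M$ (Corollary~\ref{C:estimate_f(t)}), which is what produces the exact constant $c(\Delta,M)$ in the statement.
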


\begin{corollary}\label{C:estimate_sup_p_t}
Suppose $\Delta,M>0$ are such that
\begin{enumerate}
\item
the sequence $\alpha(\lambda,k)$, $k\in I_{n(\lambda)}$, satisfies the $\Delta$-gap condition for any $\lambda\in\Lambda$;
\item
$\beta(\lambda)\leqslant M$, $\lambda\in\Lambda$.
\end{enumerate}
Then
\begin{equation*}
\sup_{\lambda\in\Lambda}\|p_t\upharpoonright_{X_\lambda}\|\leqslant ce^{-mt},\quad t\geqslant  4M+\frac{5\ln2}{\Delta}+2,
\end{equation*}
where $m=1/M$, $c=c(\Delta,M)$.
\end{corollary}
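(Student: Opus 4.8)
The plan is to reduce the statement to a pointwise bound for finite exponential sums and then to a statement about Müntz polynomials on $[0,1]$. First, since for fixed $t$ the maps $f\mapsto f(t)$ and $f\mapsto\|f\|$ are continuous on $C_\infty[0,\infty)$ and the asserted inequality passes to $\|\cdot\|$-limits, it suffices to prove it for finite linear combinations $f(t)=\sum_k a_k e^{-\alpha_k t}$, with all constants kept independent of the number of modes; thus I may assume $n$ is finite. Two elementary consequences of the hypotheses are used throughout: from $1/\alpha_1\leqslant\sum_k 1/\alpha_k\leqslant M$ we get $\alpha_1\geqslant 1/M=m$, so every mode decays at least like $e^{-mt}$, while the $\Delta$-gap condition gives $\alpha_k\geqslant m+(k-1)\Delta$, whence the number of indices with $\alpha_k\leqslant R$ is at most $MR$.

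Next I would pass to the variable $x=e^{-t}\in(0,1]$, under which $f$ becomes the Müntz polynomial $g(x)=\sum_k a_k x^{\alpha_k}\in C_0[0,1]$ with $\|g\|_{C[0,1]}=\|f\|$ (the substitution already used above via Müntz's theorem; note $g(0)=0$ since all $\alpha_k>0$). In these terms the claim reads $|g(x)|\leqslant c\,x^{m}\|g\|$ for $x\leqslant e^{-T_0}$, $T_0=4M+5\ln2/\Delta+2$; equivalently, the de-trended sum $\Phi(t)=e^{mt}f(t)=\sum_k a_k e^{-(\alpha_k-m)t}$, whose exponents $\alpha_k-m\geqslant 0$ still obey the $\Delta$-gap, must satisfy $\sup_{t\geqslant T_0}|\Phi(t)|\leqslant c\|f\|$. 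It is worth observing at the outset that the inequality has content only for genuinely large $t$: the trivial bound $|f(t)|\leqslant\|f\|$ already yields the claim whenever $ce^{-mt}\geqslant 1$, so the real task is to upgrade mere boundedness on $[0,\infty)$ into a quantitative decay rate.

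The mechanism I would use is an exact interpolation identity expressing the far value $f(t)$ through values of $f$ on a bounded initial window. Since $\{x^{\alpha_1},\dots,x^{\alpha_n}\}$ is a Chebyshev system on $(0,1]$, one may choose nodes $s_1<\dots<s_n$ in $[0,S]$ and form the Müntz--Lagrange basis $\ell_j\in\mathcal L(x^{\alpha_k}\mid k\in I_n)$ with $\ell_j(e^{-s_i})=\delta_{ij}$, obtaining the reproduction $f(t)=\sum_j \ell_j(e^{-t})f(s_j)$ for every $f\in Exp(\alpha_k\mid k\in I_n)$, and hence $|f(t)|\leqslant\|f\|\sum_j|\ell_j(e^{-t})|$. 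Writing $\ell_j(x)=\sum_k c_{jk}x^{\alpha_k}$, the weighted Lebesgue function satisfies $\sum_j|\ell_j(e^{-t})|\leqslant\sum_{j,k}|c_{jk}|\,e^{-\alpha_k t}$, so the theorem reduces to estimating this sum by $c\,e^{-mt}$ for $t\geqslant T_0$. An alternative organization, which I expect to generate precisely the $\ln 2$-terms in the constant, is to group the exponents into dyadic blocks $\{k:2^i m\leqslant\alpha_k<2^{i+1}m\}$ (each of size at most $2^i m/\Delta+1$ by the gap bound), to estimate the sup-norm of each block-projection of $f$, and to recombine the blocks at $t$ using their $e^{-2^i m t}$ decay; the lowest block governs the rate $e^{-mt}$, and $\sum_k 1/\alpha_k\leqslant M$ controls the number of relevant blocks and fixes $T_0$.

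The hard part, in either organization, is the \emph{uniform-in-$n$} control of the interpolation weights $c_{jk}$ (equivalently, of the block projections): naive coefficient estimates are hopeless, because the ratio $|a_1|/\|f\|$ is unbounded over the Müntz class, so one cannot simply bound $\sum_k|a_k|$. The decisive point is that for $t\geqslant T_0$ the factor $e^{-\alpha_k t}$ with $\alpha_k$ large suppresses the possibly large high-order coefficients of the $\ell_j$, and this suppression is exactly what the threshold buys; quantifying the trade-off is where both hypotheses enter together — the $\Delta$-gap keeps the generalized Vandermonde/Lagrange weights from growing too fast (it controls the separated products coming from $|e^{-\alpha_j s}-e^{-\alpha_l s}|$), while $\sum_k 1/\alpha_k\leqslant M$ supplies the rate $m=1/M$ and pins down the admissible window length $S$ and threshold $T_0$. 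I would isolate this as a separate Müntz-type inequality of the kind available in \cite{Borwein_Erdeliy}, establish it with explicit constants, and then track the numerical bookkeeping through to obtain the stated $c=c(\Delta,M)$; it is this bookkeeping, rather than any single conceptual step, that makes the final constant look elaborate.
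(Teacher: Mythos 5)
Your reduction is sound as far as it goes: the corollary is, exactly as you say, equivalent to a uniform pointwise bound $|f(t)|\leqslant ce^{-mt}\|f\|$ for finite sums $f(t)=\sum_k a_k e^{-\alpha_k t}$ with exponents satisfying the two hypotheses (this is the paper's Theorem~\ref{T:estimate_f(t)}, from which the corollary follows in one line, since $\|p_t\upharpoonright_{X_\lambda}\|=\sup\{|f(t)|:f\in X_\lambda,\,\|f\|\leqslant 1\}$ and $c,m$ depend only on $\Delta,M$, not on $\lambda$; your continuity argument handles the passage to the closed span). The genuine gap is that you never prove this bound. The entire quantitative content --- what you call the ``uniform-in-$n$ control of the interpolation weights $c_{jk}$,'' or of the dyadic block projections --- is deferred with ``I would isolate this as a separate M\"untz-type inequality \ldots establish it with explicit constants.'' That inequality \emph{is} the theorem: no choice of nodes $s_j$, no window length $S$, no bound on the weights, and no constant is ever derived. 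Both of your proposed organizations (M\"untz--Lagrange interpolation; dyadic blocks) are named but not executed at precisely the step where the $\Delta$-gap and the bound $\sum_k 1/\alpha_k\leqslant M$ have to do real work, so the proposal is a strategy outline rather than a proof.

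It is also worth noting that your dismissal of coefficient estimates steered you away from the route that works and that the paper takes. You are right that no uniform bound on $\sum_k|a_k|$ holds over the class, but none is needed: the paper's Lemma~\ref{L:estimate_coef_f} proves the \emph{weighted} bound $|a_k|\leqslant ae^{b\alpha_k}\|f\|$, with $a=a(\Delta,M)$, $b=b(\Delta,M)$, obtained from the exact formula for $d_{L_2[0,1]}\bigl(x^\gamma,\langle x^{\gamma_k}\mid k\rangle\bigr)$ (Lemma~\ref{L:distance}, after the substitution $x=e^{-t}$ you also use) combined with the product estimate of Lemma~\ref{L:estimate_product_nu}, which is exactly where the gap condition and $\sum_k 1/\alpha_k\leqslant M$ enter. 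For $t\geqslant b+1$ the weight is absorbed by the decay you identified: $e^{b\alpha_k}e^{-\alpha_k t}=e^{-\alpha_k}e^{\alpha_k(b+1-t)}\leqslant \frac{1}{e\alpha_k}\,e^{m(b+1-t)}$, whence $|f(t)|\leqslant aMe^{m(b+1)-1}e^{-mt}\|f\|$, which is the stated bound with $c=aMe^{m(b+1)-1}$ and threshold $b+1=4M+\frac{5\ln 2}{\Delta}+2$. In other words, applying the weighted coefficient bound to $f$ itself renders the Lagrange basis and its Lebesgue function unnecessary; the interpolation detour would only oblige you to prove the same kind of estimate for the $c_{jk}$ instead of the $a_k$, at equal or greater cost.
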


\subsection{Proof of Theorem~\ref{T:exponents_ARFS}$'$}

The following lemma plays a crucial role in the proof of Theorem~\ref{T:exponents_ARFS}$'$.

\begin{lemma}\label{L:estimate_ARFS}
Let $N\in\mathbb{N}$, and let $\alpha,\alpha_1,\ldots,\alpha_N$ be distinct positive numbers.
Then
\begin{equation*}
d(e^{-\alpha t},\langle e^{-\alpha_k t}\mid k=1,\ldots,N\rangle)\leqslant\prod_{k=1}^N\left|1-\frac{\alpha}{\alpha_k}\right|,
\end{equation*}
where $\langle e^{-\alpha_k t}\mid k=1,\ldots,N\rangle$ is the subspace spanned by $e^{-\alpha_k t}$, $k=1,\ldots,N$.
\end{lemma}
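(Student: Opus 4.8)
The plan is to prove the estimate by induction on $N$, peeling off one exponent at a time with the help of a norm-one averaging operator. For a parameter $c>0$ I would introduce
\[
(Q_c f)(t)=c\int_0^t e^{-c(t-s)}f(s)\,ds,
\]
to be applied only to finite linear combinations of exponentials, so that no general mapping properties are needed. Two elementary facts drive the argument. First, $Q_c$ is a contraction in the sup-norm: since $c\int_0^t e^{-c(t-s)}\,ds=1-e^{-ct}\leqslant 1$, we get $|(Q_cf)(t)|\leqslant\|f\|$ for every $t\geqslant 0$, hence $\|Q_cf\|\leqslant\|f\|$. Second, a direct computation gives, for $\gamma\neq c$,
\[
Q_c e^{-\gamma t}=\frac{c}{c-\gamma}\bigl(e^{-\gamma t}-e^{-ct}\bigr),
\]
so $Q_c$ sends $e^{-\gamma t}$ into the span of $e^{-\gamma t}$ and $e^{-ct}$. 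Taking $\gamma=\alpha$ and rearranging yields the key identity
\[
e^{-\alpha t}-e^{-ct}=\Bigl(1-\frac{\alpha}{c}\Bigr)Q_c e^{-\alpha t},
\]
which is exactly the mechanism that will produce one factor $|1-\alpha/c|$ per step.

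With these in hand the induction is short. The base case $N=0$ (empty family) reads $\|e^{-\alpha t}\|=1$, the empty product. For the step I would fix distinct positive $\alpha,\alpha_1,\dots,\alpha_N$ and apply the inductive hypothesis to the target $e^{-\alpha t}$ and the first $N-1$ exponents, obtaining $g\in\langle e^{-\alpha_k t}\mid k=1,\dots,N-1\rangle$ with $\|e^{-\alpha t}-g\|\leqslant\prod_{k=1}^{N-1}|1-\alpha/\alpha_k|$. I then set $c=\alpha_N$ and take as candidate $\widetilde g=(1-\alpha/\alpha_N)\,Q_{\alpha_N}g+e^{-\alpha_N t}$. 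Because $Q_{\alpha_N}$ sends each $e^{-\alpha_k t}$ with $k\leqslant N-1$ into the span of $e^{-\alpha_k t}$ and $e^{-\alpha_N t}$ (all the $\alpha_k$ being distinct from $\alpha_N$), the element $\widetilde g$ lies in $\langle e^{-\alpha_k t}\mid k=1,\dots,N\rangle$. Subtracting and invoking the key identity with $c=\alpha_N$ gives $e^{-\alpha t}-\widetilde g=(1-\alpha/\alpha_N)\,Q_{\alpha_N}(e^{-\alpha t}-g)$, whence by the contraction property
\[
\|e^{-\alpha t}-\widetilde g\|\leqslant\Bigl|1-\frac{\alpha}{\alpha_N}\Bigr|\,\|e^{-\alpha t}-g\|\leqslant\prod_{k=1}^{N}\Bigl|1-\frac{\alpha}{\alpha_k}\Bigr|,
\]
which is the asserted bound (the case $N=1$ also follows at once from the key identity with $g=0$).

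The place where the idea is concentrated is the choice of $Q_c$: it must simultaneously be a sup-norm contraction and act on $e^{-\alpha t}$ so as to peel off precisely the scalar factor $1-\alpha/c$ while enlarging the span by only the single new exponent $e^{-ct}$. Finding that operator is the essential point; everything after it is bookkeeping. The only computation worth recording explicitly is the integral $Q_c e^{-\gamma t}=\frac{c}{c-\gamma}(e^{-\gamma t}-e^{-ct})$, and it is worth remarking that the asymmetry of the bound (the product runs over $\alpha_1,\dots,\alpha_N$ but not over $\alpha$) is mirrored by the fact that each step divides by the newly used $\alpha_k$. Notably, no fine estimate of the approximation error is required: the crude bound $\|Q_c\|\leqslant 1$ already does all the work.
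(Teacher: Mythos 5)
Your proof is correct and is essentially the same argument as the paper's (von Golitschek's): the paper's recursion $f_k(t)=(\alpha_k-\alpha)\int_0^t e^{-\alpha_k(t-v)}f_{k-1}(v)\,dv$ is exactly $f_k=\bigl(1-\tfrac{\alpha}{\alpha_k}\bigr)Q_{\alpha_k}f_{k-1}$ in your notation, and your induction, once unrolled, reproduces this iteration, with the same two ingredients (sup-norm contractivity of the averaging operator and the fact that it preserves $e^{-\alpha t}$ modulo one new exponent). The only cosmetic difference is that you track the approximant $\widetilde g$ while the paper tracks the error $f_k$ directly.
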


This lemma is a direct consequence of the beautiful argument of M. von Golitschek~\cite[p.175, E1, a]{Borwein_Erdeliy}
(use the substitution $x=e^{-t}$).
For the convenience of the reader, we include its proof.

\begin{proof}[Proof of Lemma~\ref{L:estimate_ARFS}]
Define $f_0(t)=e^{-\alpha t}$ and
\begin{equation*}
f_k(t)=(\alpha_k-\alpha)\int_0^t e^{-\alpha_k(t-v)}f_{k-1}(v)\,dv,\quad k=1,\ldots,N.
\end{equation*}
By induction on $k$ it is easy to show that $f_k(t)=e^{-\alpha t}+g_k(t)$, where $g_k(t)\in\langle e^{-\alpha_j t}\mid j=1,\ldots,k\rangle$.
Hence, $f_N(t)=e^{-\alpha t}+g_N(t)$, where $g_N(t)\in\langle e^{-\alpha_j t}\mid j=1,\ldots,N\rangle$.

It is easily seen that $\|f_k\|\leqslant|1-\alpha/\alpha_k|\|f_{k-1}\|$, $k=1,\ldots,N$.
Since $\|f_0\|=1$, we conclude that $\|f_N\|\leqslant\prod_{k=1}^N|1-\alpha/\alpha_k|$.
This completes the proof.
\end{proof}

\begin{corollary}\label{C:estimate_ARFS}
Suppose that $\alpha_k\geqslant\alpha$, $k=1,\ldots,N$.
Using the inequality $1-x\leqslant e^{-x}$, $x\geqslant 0$, we get
\begin{equation*}
d(e^{-\alpha t},\langle e^{-\alpha_k t}\mid k=1,\ldots,N\rangle)\leqslant
\exp{\left(-\alpha\left(\frac{1}{\alpha_1}+\ldots+\frac{1}{\alpha_N}\right)\right)}.
\end{equation*}
\end{corollary}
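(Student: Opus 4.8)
The plan is to read the bound off directly from Lemma~\ref{L:estimate_ARFS} and then apply the elementary inequality $1-x\leqslant e^{-x}$ factor by factor.

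First I would invoke Lemma~\ref{L:estimate_ARFS}, which gives
\begin{equation*}
d(e^{-\alpha t},\langle e^{-\alpha_k t}\mid k=1,\ldots,N\rangle)\leqslant\prod_{k=1}^N\left|1-\frac{\alpha}{\alpha_k}\right|.
\end{equation*}
The hypothesis $\alpha_k\geqslant\alpha$ (together with $\alpha>0$) ensures $0<\alpha/\alpha_k\leqslant 1$, so each factor satisfies $1-\alpha/\alpha_k\geqslant 0$ and the absolute value may be dropped: $|1-\alpha/\alpha_k|=1-\alpha/\alpha_k$.

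Next I would apply $1-x\leqslant e^{-x}$ with $x=\alpha/\alpha_k\geqslant 0$ to each factor, obtaining $1-\alpha/\alpha_k\leqslant e^{-\alpha/\alpha_k}$. Since all factors are nonnegative, multiplying these estimates over $k=1,\ldots,N$ is legitimate and yields
\begin{equation*}
\prod_{k=1}^N\left(1-\frac{\alpha}{\alpha_k}\right)\leqslant\prod_{k=1}^N e^{-\alpha/\alpha_k}=\exp\left(-\alpha\sum_{k=1}^N\frac{1}{\alpha_k}\right),
\end{equation*}
which, chained with the bound from the lemma, is exactly the asserted inequality.

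There is essentially no real obstacle: the corollary is immediate from the lemma. The only substantive point---and the sole place the hypothesis $\alpha_k\geqslant\alpha$ enters---is the removal of the absolute values, which guarantees the factors are nonnegative so that the termwise bound $1-x\leqslant e^{-x}$ survives the multiplication.
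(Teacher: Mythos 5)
Your proof is correct and is exactly the paper's argument: the corollary is read off from Lemma~\ref{L:estimate_ARFS} by dropping the absolute values (legitimate since $\alpha_k\geqslant\alpha$) and applying $1-x\leqslant e^{-x}$ factor by factor. Your write-up just makes explicit the nonnegativity point that the paper leaves implicit.
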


Theorem~\ref{T:exponents_ARFS}$'$ follows from Corollary~\ref{C:estimate_ARFS} and the following well-known fact:
for any $\alpha_0>0$ the lineal spanned by $e^{-\alpha t}$, $\alpha\in(0,\alpha_0]$, is dense in $C_\infty[0,\infty)$.

\subsection{Proof of Theorem~\ref{T:estimate_f(t)}}

The following lemma plays a crucial role in the proof of Theorem~\ref{T:estimate_f(t)}.

\begin{lemma}\label{L:estimate_coef_f}
Let $N$ be a natural number, $\alpha_1,\ldots,\alpha_N$ positive numbers with $\alpha_1<\ldots<\alpha_N$.
Suppose that $\Delta,M>0$ are such that
\begin{enumerate}
\item
if $N\geqslant 2$, then $\alpha_{k+1}-\alpha_k\geqslant\Delta$, $k=1,\ldots,N-1$;
\item
$\sum_{k=1}^N 1/\alpha_k\leqslant M$.
\end{enumerate}
Set
\begin{equation*}
a=a(\Delta,M)=\exp(2M+\frac{5\ln 2}{2\Delta}+3\ln2),\quad b=b(\Delta,M)=4M+\frac{5\ln 2}{\Delta}+1.
\end{equation*}
If $f(t)=\sum_{k=1}^N a_ke^{-\alpha_k t}$, then
\begin{equation}\label{E:estimate_coefficients}
|a_k|\leqslant ae^{b\alpha_k}\|f\|,\quad k=1,\ldots,N.
\end{equation}
\end{lemma}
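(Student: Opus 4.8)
The plan is to argue by duality, turning the coefficient estimate into a norm bound for a single extracting functional. Since $\|f\|=\sup_{t\ge 0}|f(t)|$, every finite signed measure $\mu$ on $[0,\infty)$ — for instance any finite combination of the point evaluations $p_t$ from Section~\ref{SS:exponents_results} — acts on $C_\infty[0,\infty)$ with norm equal to its total variation $\|\mu\|$. So it suffices, for the fixed index $k$, to construct a \emph{representing measure} $\mu_k$ with
\[
\int_0^\infty e^{-\alpha_j t}\,d\mu_k(t)=\delta_{jk},\qquad j=1,\ldots,N,
\]
because then $a_k=\int_0^\infty f\,d\mu_k$ and hence $|a_k|\le\|\mu_k\|\,\|f\|$. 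The whole problem reduces to producing such a $\mu_k$ with $\|\mu_k\|\le a\,e^{b\alpha_k}$.

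To build $\mu_k$ I would work on the right half-plane via the bounded backward resolvents $(R_\beta g)(t)=\int_t^\infty e^{-\beta(v-t)}g(v)\,dv$, the analogue on $C_\infty[0,\infty)$ of the forward operator used in the proof of Lemma~\ref{L:estimate_ARFS}. These satisfy $\|R_\beta\|\le 1/\beta$ and $R_\beta e^{-\alpha t}=\tfrac{1}{\alpha+\beta}e^{-\alpha t}$. The commuting product $W=\prod_{j\neq k}\bigl(I-2\alpha_jR_{\alpha_j}\bigr)$ then acts on each exponential by the half-plane Blaschke symbol, $We^{-\alpha_i t}=\prod_{j\neq k}\tfrac{\alpha_i-\alpha_j}{\alpha_i+\alpha_j}\,e^{-\alpha_i t}$, killing every term except $i=k$. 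Consequently $Wf=a_k\bigl(\prod_{j\neq k}\tfrac{\alpha_k-\alpha_j}{\alpha_k+\alpha_j}\bigr)e^{-\alpha_k t}$, and evaluation at $t=0$ gives the exact extraction identity
\[
a_k=\Bigl(\prod_{j\neq k}\frac{\alpha_k+\alpha_j}{\alpha_k-\alpha_j}\Bigr)(Wf)(0).
\]
Here $g\mapsto(Wg)(0)$ is integration against the measure $\mu$ whose Laplace transform is $\prod_{j\neq k}\tfrac{s-\alpha_j}{s+\alpha_j}$, so $\mu_k$ is $\mu$ rescaled by the Blaschke factor $\Pi_k:=\prod_{j\neq k}\tfrac{\alpha_k+\alpha_j}{|\alpha_k-\alpha_j|}$.

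The step I am confident about is bounding $\Pi_k$ by $e^{b\alpha_k}$, and it is exactly here that the two hypotheses enter and the constants $4M$ and $5\ln 2/\Delta$ should appear. Writing $\tfrac{\alpha_k+\alpha_j}{|\alpha_k-\alpha_j|}=1+\tfrac{2\min(\alpha_j,\alpha_k)}{|\alpha_k-\alpha_j|}$, I would split the indices $j\neq k$ into a \emph{near} part $|\alpha_k-\alpha_j|\le\alpha_k$ and a \emph{far} part $\alpha_j>2\alpha_k$. For the near part the $\Delta$-gap condition gives $|\alpha_k-\alpha_j|\ge|k-j|\Delta$, whence $\sum_{\mathrm{near}}\ln\bigl(1+\tfrac{2\alpha_k}{|k-j|\Delta}\bigr)$ is controlled by a telescoping/binomial estimate of the type $\binom{2m}{m}\le 4^m$ with $m\approx\alpha_k/\Delta$, producing a term $O\!\bigl(\tfrac{\alpha_k\ln 2}{\Delta}\bigr)$. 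For the far part $\alpha_j-\alpha_k>\tfrac12\alpha_j$, so $\ln\bigl(1+\tfrac{2\alpha_k}{\alpha_j-\alpha_k}\bigr)\le\tfrac{4\alpha_k}{\alpha_j}$, and summing against the mass bound $\sum_j 1/\alpha_j\le M$ yields $\le 4M\alpha_k$. Together these give $\Pi_k\le\exp\bigl((4M+O(\ln 2/\Delta))\alpha_k\bigr)$, matching the required form.

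The hard part — the main obstacle — is controlling $\|\mu_k\|=\Pi_k\,\|\mu\|$, i.e.\ the total variation of $\mu$, \emph{independently of} $N$. The naive bound $\|\mu\|\le\prod_{j\neq k}\|I-2\alpha_jR_{\alpha_j}\|\le 3^{\,N-1}$ is hopeless, and one checks that with $\beta_j=\alpha_j$ the density of $\mu$ genuinely picks up factors $\tfrac{\alpha_i\alpha_j}{|\alpha_i-\alpha_j|}$ under clustering, so this particular $\mu$ is too large. The real task is therefore to choose the resolvent parameters $\beta_j$ (or to insert a regularizing decay factor so that the symbol is a true $L^1$-Laplace transform) and to exploit the cancellation in the Blaschke product — an $H^1$-type estimate on the half-plane — so that $\|\mu_k\|$ comes out comparable to $\Pi_k$ rather than to $\Pi_k$ times an exponentially large operator norm. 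It is from balancing this construction against the near/far estimate above that I expect the precise constants $a$ and $b$ in the statement to emerge; getting the cancellation quantitatively right, with an $N$-independent bound, is where essentially all of the work lies.
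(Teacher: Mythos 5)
Your duality reduction and your near/far estimate for the product $\Pi_k=\prod_{j\neq k}\frac{\alpha_k+\alpha_j}{|\alpha_k-\alpha_j|}$ are fine as far as they go --- the latter is essentially the paper's Lemma~\ref{L:estimate_product_nu}, proved the same way (the paper splits your ``near'' range further into $y_k<x$ and $x<y_k<2x$ and uses the binomial bounds $\binom{2m+1}{m}\leqslant 2^{2m+1}$, $\binom{3m+2}{m}\leqslant 2^{3m+2}$). But the proposal has a genuine gap, which you yourself flag: you never produce an extraction functional whose norm is bounded, independently of $N$, by a fixed multiple of $\Pi_k$. Your candidate $W=\prod_{j\neq k}(I-2\alpha_j R_{\alpha_j})$ only gives $\|W\|\leqslant 3^{N-1}$, you observe that the associated measure genuinely blows up under clustering, and the ``$H^1$-type cancellation'' you defer to is not a technical detail --- it \emph{is} the lemma. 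As written, everything after the reduction $|a_k|\leqslant\Pi_k\,\|\mu\|\,\|f\|$ is a plan, not a proof.

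The paper closes exactly this gap by a different and much more economical device: it gets the optimal extraction functional, together with its exact norm, for free from Hilbert space geometry. Substituting $x=e^{-t}$ turns $f$ into $g(x)=\sum_{k=1}^N a_k x^{\alpha_k}$ on $[0,1]$ with $\|g\|_{L_2[0,1]}\leqslant\|f\|$; since $g-a_kx^{\alpha_k}$ lies in the span of the other powers, one has $|a_k|\,d_{L_2[0,1]}(x^{\alpha_k},\langle x^{\alpha_j}\mid j\neq k\rangle)\leqslant\|g\|_{L_2[0,1]}\leqslant\|f\|$, and this $L_2$ distance is given in closed form by the classical M\"untz--Gram formula (Lemma~\ref{L:distance}): it equals $\frac{1}{\sqrt{2\alpha_k+1}}\prod_{j\neq k}\frac{|\alpha_k-\alpha_j|}{\alpha_k+\alpha_j+1}$. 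Thus the coefficient bound becomes $|a_k|\leqslant\sqrt{2\alpha_k+1}\,\prod_{j\neq k}\nu\bigl(\alpha_k+\tfrac12,\alpha_j+\tfrac12\bigr)\|f\|$, with no $N$-dependent loss whatsoever, and Lemma~\ref{L:estimate_product_nu} finishes the job. In other words, in $L_2$ the dual functional you are trying to build is the orthogonal projection onto the complement of $\langle x^{\alpha_j}\mid j\neq k\rangle$, and its norm is computed, not estimated --- the cancellation problem never arises. If you want to salvage your approach, replace the half-plane measure construction by this $L_2$ step; note also that the stated constants $a(\Delta,M)$ and $b(\Delta,M)$ reflect the shift $\alpha\mapsto\alpha+\tfrac12$ and the factor $\sqrt{2\alpha_k+1}$ coming from the Gram formula, which your unshifted product $\Pi_k$ would not reproduce.
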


\begin{corollary}\label{C:estimate_f(t)}
Set $m=1/M$. We have
\begin{equation*}
|f(t)|\leqslant aMe^{m(b+1)-1}e^{-mt}\|f\|,\quad t\geqslant b+1.
\end{equation*}
\end{corollary}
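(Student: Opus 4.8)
The plan is to start from the coefficient estimate~\eqref{E:estimate_coefficients} provided by Lemma~\ref{L:estimate_coef_f} and simply sum the pointwise bounds. For $f(t)=\sum_{k=1}^N a_k e^{-\alpha_k t}$ and any $t\geqslant b+1$, the triangle inequality together with $|a_k|\leqslant ae^{b\alpha_k}\|f\|$ gives
\[
|f(t)|\leqslant\sum_{k=1}^N|a_k|e^{-\alpha_k t}\leqslant a\|f\|\sum_{k=1}^N e^{-\alpha_k(t-b)}.
\]
Writing $s=t-b\geqslant 1$, everything reduces to bounding $\sum_{k=1}^N e^{-\alpha_k s}$ by $Me^{m-1}e^{-ms}$, since then $|f(t)|\leqslant a\|f\|Me^{m-1}e^{-m(t-b)}=aMe^{m(b+1)-1}e^{-mt}\|f\|$, which is exactly the claimed inequality.

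The first key observation is that all the exponents are bounded below by $m$. Indeed, from hypothesis~(2) of Lemma~\ref{L:estimate_coef_f} one has $1/\alpha_1\leqslant\sum_{k=1}^N 1/\alpha_k\leqslant M$, so $\alpha_1\geqslant 1/M=m$, and hence $\alpha_k\geqslant m$ for every $k$. This lets me peel off the desired exponential factor: since $\alpha_k-m\geqslant 0$ and $s\geqslant 1$, I can write
\[
e^{-\alpha_k s}=e^{-ms}\,e^{-(\alpha_k-m)s}\leqslant e^{-ms}\,e^{-(\alpha_k-m)}=e^{m}e^{-ms}e^{-\alpha_k}.
\]

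The second key observation converts $\sum_k e^{-\alpha_k}$ back into the controlled quantity $\sum_k 1/\alpha_k$. Using the elementary inequality $xe^{-x}\leqslant e^{-1}$ for $x>0$ (maximum at $x=1$), one gets $e^{-\alpha_k}\leqslant\frac{1}{e\alpha_k}$, whence $\sum_{k=1}^N e^{-\alpha_k}\leqslant\frac{1}{e}\sum_{k=1}^N\frac{1}{\alpha_k}\leqslant\frac{M}{e}$. Combining the two observations yields $\sum_{k=1}^N e^{-\alpha_k s}\leqslant e^m e^{-ms}\cdot\frac{M}{e}=Me^{m-1}e^{-ms}$, precisely the bound needed to close the argument. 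There is no serious obstacle here; the only point requiring care is the bookkeeping of constants in the final exponent, where one checks $m-1+mb=m(b+1)-1$ so that the stated constant $aMe^{m(b+1)-1}$ comes out exactly.
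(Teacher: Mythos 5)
Your proof is correct and is essentially the paper's own argument: both bound $|f(t)|$ via the coefficient estimate, split off a factor $e^{-\alpha_k}$ while using $\alpha_k\geqslant m$ (from $\sum_k 1/\alpha_k\leqslant M$) to control the $t$-dependent exponential, and then apply $e^{-x}\leqslant 1/(ex)$ to convert $\sum_k e^{-\alpha_k}$ into $\frac{1}{e}\sum_k 1/\alpha_k\leqslant M/e$. The only difference is cosmetic bookkeeping (your substitution $s=t-b$ versus the paper's factorization $e^{-\alpha_k}e^{\alpha_k(b+1-t)}$), which yields the identical constant $aMe^{m(b+1)-1}$.
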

\begin{proof}
Using~\eqref{E:estimate_coefficients}, we get
\begin{equation}\label{E:estimate_f_first_step}
|f(t)|\leqslant\sum_{k=1}^N|a_k|e^{-\alpha_k t}\leqslant a\|f\|\sum_{k=1}^N e^{b\alpha_k}e^{-\alpha_k t}=
a\|f\|\sum_{k=1}^Ne^{-\alpha_k}e^{\alpha_k(b+1-t)}.
\end{equation}
To estimate from above $e^{-\alpha_k}$, we will use the inequality $e^{-x}\leqslant 1/(ex)$, $x>0$.
Let us estimate from above $e^{\alpha_k(b+1-t)}$.
Since $\sum_{k=1}^N 1/\alpha_k\leqslant M$, we conclude that $\alpha_k\geqslant 1/M=m$, $k=1,\ldots,N$.
Suppose that $t\geqslant b+1$.
Then $\alpha_k(b+1-t)\leqslant m(b+1-t)$, $e^{\alpha_k(b+1-t)}\leqslant e^{m(b+1-t)}$.
Using~\eqref{E:estimate_f_first_step}, we have
\begin{equation*}
|f(t)|\leqslant a\|f\|\sum_{k=1}^N\frac{1}{e\alpha_k}e^{m(b+1-t)}\leqslant a\|f\|\frac{M}{e}e^{m(b+1-t)}=aMe^{m(b+1)-1}e^{-mt}\|f\|.
\end{equation*}
\end{proof}

Theorem~\ref{T:estimate_f(t)} is a direct consequence of Corollary~\ref{C:estimate_f(t)} (note that $c=aMe^{m(b+1)-1}$).

Now we proceed to the proof of Lemma~\ref{L:estimate_coef_f}.
We will need a few technical results. The following lemma is well-known (see, e.g., \cite[p.176-177, E2]{Borwein_Erdeliy}).

\begin{lemma}\label{L:distance}
Let $N$ be a natural number, $\gamma,\gamma_1,\ldots,\gamma_N$ distinct real numbers greater than $-1/2$.
Then
\begin{equation*}
d_{L_2[0,1]}(x^\gamma,\langle x^{\gamma_k}\mid k=1,\ldots,N\rangle)=\frac{1}{\sqrt{2\gamma+1}}\prod_{k=1}^N\frac{|\gamma-\gamma_k|}{\gamma+\gamma_k+1},
\end{equation*}
where $\langle x^{\gamma_k}\mid k=1,\ldots,N\rangle$ is the linear span of $x^{\gamma_k}$, $k=1,\ldots,N$.
\end{lemma}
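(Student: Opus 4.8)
The plan is to work in the Hilbert space $L_2[0,1]$ and reduce everything to a computation of Gram determinants, which turn out to be Cauchy determinants. First I would record the basic facts: the hypothesis $\gamma>-1/2$ guarantees $x^\gamma\in L_2[0,1]$, since $\int_0^1 x^{2\gamma}\,dx=1/(2\gamma+1)<\infty$ exactly when $2\gamma+1>0$; more generally, for real $a,b>-1/2$ one has the inner product $\langle x^a,x^b\rangle_{L_2[0,1]}=\int_0^1 x^{a+b}\,dx=1/(a+b+1)$. Since $\gamma,\gamma_1,\dots,\gamma_N$ are pairwise distinct, the functions $x^\gamma,x^{\gamma_1},\dots,x^{\gamma_N}$ are linearly independent, so the relevant Gram determinants are positive and the classical formula
\begin{equation*}
d_{L_2[0,1]}(x^\gamma,\langle x^{\gamma_k}\mid k=1,\dots,N\rangle)^2=\frac{G(x^\gamma,x^{\gamma_1},\dots,x^{\gamma_N})}{G(x^{\gamma_1},\dots,x^{\gamma_N})}
\end{equation*}
applies, where $G(\cdot)$ denotes the Gram determinant of the indicated system.

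Next I would identify both Gram matrices as symmetric Cauchy matrices. Writing $a_i=\gamma_i+\tfrac12$, the entry $\langle x^{\gamma_i},x^{\gamma_j}\rangle=1/(\gamma_i+\gamma_j+1)$ equals $1/(a_i+a_j)$, so each Gram matrix has the Cauchy form $[1/(a_i+a_j)]$. I would then invoke the Cauchy determinant identity $\det[1/(a_i+a_j)]_{i,j}=\prod_{i<j}(a_i-a_j)^2\big/\prod_{i,j}(a_i+a_j)$, which in our variables reads
\begin{equation*}
\det\Big[\frac{1}{\gamma_i+\gamma_j+1}\Big]=\frac{\prod_{i<j}(\gamma_i-\gamma_j)^2}{\prod_{i,j}(\gamma_i+\gamma_j+1)}.
\end{equation*}
Applying this once to the index set $\{0,1,\dots,N\}$ (with the convention $\gamma_0:=\gamma$) for the numerator, and once to $\{1,\dots,N\}$ for the denominator, expresses $d^2$ as a ratio of two explicit products.

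Finally I would simplify this ratio by isolating the factors carrying the index $0$. In the difference-product the index-$0$ contribution is $\prod_{k=1}^N(\gamma-\gamma_k)^2$, while in the sum-product it is $(2\gamma+1)\prod_{k=1}^N(\gamma+\gamma_k+1)^2$; every factor not touching the index $0$ cancels between numerator and denominator. This leaves
\begin{equation*}
d^2=\frac{\prod_{k=1}^N(\gamma-\gamma_k)^2}{(2\gamma+1)\prod_{k=1}^N(\gamma+\gamma_k+1)^2},
\end{equation*}
and taking the positive square root (legitimate since each $\gamma_i+\gamma_j+1>0$) yields the claimed formula. The only genuine work is this cancellation bookkeeping, and the main potential obstacle is merely keeping track of which factors carry the distinguished index $0$; once the Cauchy identity is in hand there is no analytic difficulty beyond the initial observation that $\gamma>-1/2$ places everything in $L_2[0,1]$.
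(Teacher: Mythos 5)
Your proof is correct. Note that the paper itself contains no proof of this lemma: it is stated as well known, with a citation to Borwein and Erd\'elyi (pp.~176--177, E2), and the argument you give --- the distance-to-a-subspace formula via the ratio of Gram determinants, the identification of both Gram matrices as symmetric Cauchy matrices in the variables $a_i=\gamma_i+\tfrac12$, the Cauchy determinant identity, and the cancellation of all factors not carrying the distinguished index --- is precisely the standard argument behind that citation. Your use of the hypotheses is also in order: $\gamma,\gamma_k>-1/2$ gives both square-integrability and positivity of every $\gamma_i+\gamma_j+1$ (so the Cauchy matrices are well defined and the final square root is legitimate), and pairwise distinctness of the exponents gives linear independence of the powers, so the denominator Gram determinant is nonzero and the distance formula applies; the bookkeeping leading to $d^2=\prod_{k=1}^N(\gamma-\gamma_k)^2\big/\bigl((2\gamma+1)\prod_{k=1}^N(\gamma+\gamma_k+1)^2\bigr)$ checks out.
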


Define
\begin{equation*}
\nu(x,y)=\frac{x+y}{|x-y|},\quad x,y\geqslant 0,\quad x\neq y.
\end{equation*}

The following lemma and its proof are motivated by~\cite[p.177, E3, a]{Borwein_Erdeliy}

\begin{lemma}\label{L:estimate_product_nu}
Let $x,y_1,\ldots,y_N>0$.
Suppose that $\Delta>0$ is such that $|x-y_k|\geqslant\Delta$, $k=1,\ldots,N$, and $|y_k-y_l|\geqslant\Delta$ for $k\neq l$.
Then
\begin{equation*}
\prod_{k=1}^N\nu(x,y_k)\leqslant \exp\left(\left(4\sum_{k=1}^N \frac{1}{y_k}+\frac{5\ln2}{\Delta}\right)x+3\ln2\right).
\end{equation*}
\end{lemma}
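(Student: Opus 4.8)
The plan is to take logarithms and bound $\ln\prod_{k=1}^N\nu(x,y_k)=\sum_{k=1}^N\ln\frac{x+y_k}{|x-y_k|}$ term by term, splitting the indices according to how close $y_k$ is to $x$. First I would record the elementary identity
\[
\ln\nu(x,y_k)=\ln\Bigl(1+\frac{2\min(x,y_k)}{|x-y_k|}\Bigr),
\]
which holds because $x+y_k-|x-y_k|=2\min(x,y_k)$. Writing $w_k=|x-y_k|\ge\Delta$, this gives the universal bound $\ln\nu(x,y_k)\le\ln(1+2x/w_k)$, while for $y_k<x$ one may keep the sharper form $\ln\nu(x,y_k)=\ln(1+2y_k/w_k)$.

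Next I would split $\{1,\dots,N\}$ into the \emph{far} indices ($y_k\le x/2$ or $y_k\ge 2x$) and the \emph{near} indices ($x/2<y_k<2x$). For far indices I would use $\ln(1+t)\le t$ together with factor-$2$ comparisons: if $y_k\ge 2x$ then $w_k=y_k-x\ge y_k/2$, so $\ln\nu\le 2x/w_k\le 4x/y_k$; if $y_k\le x/2$ then $w_k=x-y_k\ge x/2\ge y_k$, so $\ln\nu\le 2y_k/w_k\le 4y_k/x\le 4x/y_k$. Summing over the far indices and then enlarging the index set to all $k$ yields $\sum_{\mathrm{far}}\ln\nu\le 4x\sum_{k=1}^N 1/y_k$, which is exactly the first term of the asserted bound.

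The work is in the near indices, and this is where the $\Delta$-separation enters. I would treat the near indices above $x$ and below $x$ separately. For those above, the distances $w_k=y_k-x$ are $\Delta$-separated and each $\ge\Delta$, so after ordering them increasingly the $j$-th satisfies $w_{(j)}\ge j\Delta$; since $t\mapsto\ln(1+2x/(t\Delta))$ is nonnegative and decreasing, and the relevant $j$ satisfy $j\Delta\le w_{(j)}<x$, monotonicity gives $\sum\ln\nu\le\sum_{j\ge1,\,j\Delta<x}\ln(1+2x/(j\Delta))\le\int_0^{x/\Delta}\ln\bigl(1+\tfrac{2x}{t\Delta}\bigr)\,dt$. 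Evaluating $\int_0^J\ln(1+a/t)\,dt=(J+a)\ln(J+a)-J\ln J-a\ln a$ with $J=x/\Delta$, $a=2x/\Delta$, the $\ln(x/\Delta)$ terms cancel and the integral equals $\tfrac{x}{\Delta}\ln(27/4)$. The near-below indices, for which $w_k=x-y_k<x/2$, are handled identically with $J=x/(2\Delta)$ and contribute $\tfrac{x}{\Delta}\bigl(\tfrac52\ln\tfrac52-\tfrac32\ln2\bigr)$. Adding the two near contributions and exponentiating yields the claimed inequality.

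I expect the main obstacle to be the constant bookkeeping in the near part. One must check that the cancellation of the $\ln(x/\Delta)$ terms really occurs, since this is what keeps the near estimate of order $x/\Delta$ rather than $\tfrac{x}{\Delta}\log\tfrac{x}{\Delta}$, and then verify that the sum of the two explicit constants $\ln(27/4)+\tfrac52\ln\tfrac52-\tfrac32\ln2\approx 3.16$ stays below $5\ln2\approx 3.47$, with the remaining additive slack $3\ln2$ absorbing the rounding in the counts $j<x/\Delta$, $j<x/(2\Delta)$ and the passage from the sums to the integrals. The only genuinely delicate points are that every distance is at least $\Delta$ (needed so that $w_{(j)}\ge j\Delta$ already at $j=1$) and that the above- and below-families must be kept apart, since a near point above $x$ and one below $x$ may be arbitrarily close to each other and so are not jointly $\Delta$-separated.
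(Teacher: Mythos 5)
Your proof is correct, and it takes a genuinely different route from the paper's. The paper first rescales to reduce to $\Delta=1$ (using that $\nu$ is homogeneous of degree zero), splits the indices at $x$ and $2x$, handles $y_k\geqslant 2x$ exactly as you do, and then bounds the two products over $\{y_k<x\}$ and $\{x<y_k<2x\}$ by a discrete worst-case comparison: by monotonicity of $\nu(x,\cdot)$ and the unit separation, each product is at most $\prod_{j=1}^{m}\nu(x,x\mp j)$ with $m=[x]$, which is a binomial-type quantity bounded by $\binom{2m+1}{m}\leqslant 2^{2m+1}$, respectively $\binom{3m+2}{m}\leqslant 2^{3m+2}$; this is precisely where the constants $5\ln 2$ and $3\ln 2$ in the statement come from. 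You instead split at $x/2$ and $2x$, push the points $y_k\leqslant x/2$ into the harmonic-sum term via the identity $\nu=1+2\min(x,y)/|x-y|$, and control the near points by ordering the gaps ($w_{(j)}\geqslant j\Delta$) and comparing the log-sum with the explicit integral $\int_0^J\ln(1+a/t)\,dt=(J+a)\ln(J+a)-J\ln J-a\ln a$. All your computations check out: the $\ln(x/\Delta)$ terms do cancel, and the near-point constant is $3\ln 3+\tfrac52\ln 5-6\ln 2\approx 3.16<5\ln 2\approx 3.47$. In fact your argument is slightly sharper than you claim: since for a positive decreasing integrand the comparison $\sum_{j=1}^{J_0}f(j)\leqslant\int_0^{J_0}f(t)\,dt$ is exact, no rounding slack is consumed, and you never need the additive $3\ln 2$ at all. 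In summary, the paper's proof buys simpler bookkeeping (scaling to $\Delta=1$, purely elementary binomial estimates) and produces exactly the constants of the statement, while yours trades the combinatorial comparison for a calculus computation and yields a marginally better bound.
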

\begin{proof}
First, we will prove the required inequality for $\Delta=1$.
Define $\mathcal{K}_1=\{k\mid y_k<x\}$, $\mathcal{K}_2=\{k\mid y_k\in (x,2x)\}$, and $\mathcal{K}_3=\{k\mid y_k\geqslant 2x\}$.

Let us estimate $\prod_{k\in \mathcal{K}_1}\nu(x,y_k)$.
For $y\in[0,x)$ we have $\nu(x,y)=(x+y)/(x-y)$.
Hence, $\nu(x,y)$ increases in $y\in[0,x)$.
Set $m=[x]$.
We have
\begin{align*}
&\prod_{k\in \mathcal{K}_1}\nu(x,y_k)\leqslant\prod_{j=1}^m\nu(x,x-j)=\frac{\prod_{j=1}^m(2x-j)}{m!}\leqslant\\
&\leqslant\frac{\prod_{j=1}^m (2m+2-j)}{m!}=\binom{2m+1}{m}\leqslant 2^{2m+1}\leqslant 2^{2x+1}.
\end{align*}

Now we estimate $\prod_{k\in \mathcal{K}_2}\nu(x,y_k)$.
For $y>x$, we have $\nu(x,y)=(y+x)/(y-x)=1+2x/(y-x)$.
Hence, $\nu(x,y)$ decreases in $y\in(x,\infty)$.
We have
\begin{align*}
&\prod_{k\in \mathcal{K}_2}\nu(x,y_k)\leqslant\prod_{j=1}^m\nu(x,x+j)=\frac{\prod_{j=1}^m(2x+j)}{m!}\leqslant\\
&\leqslant\frac{\prod_{j=1}^m (2m+2+j)}{m!}=\binom{3m+2}{m}\leqslant 2^{3m+2}\leqslant 2^{3x+2}.
\end{align*}

Finally, we estimate from above $\prod_{k\in \mathcal{K}_3}\nu(x,y_k)$.
For $y>x$ we have
\begin{equation*}
\nu(x,y)=\frac{y+x}{y-x}=1+\frac{2x}{y-x}\leqslant\exp{\frac{2x}{y-x}}.
\end{equation*}
Since $2x/(y-x)\leqslant 4x/y$ for $y\geqslant 2x$, we conclude that $\nu(x,y)\leqslant\exp(4x/y)$, $y\geqslant 2x$.
It follows that
\begin{equation*}
\prod_{k\in \mathcal{K}_3}\nu(x,y_k)\leqslant\exp\left(4x\sum_{k=1}^N \frac{1}{y_k}\right).
\end{equation*}

Using the obtained estimates for $\prod_{k\in \mathcal{K}_l}\nu(x,y_k)$, $l=1,2,3$, we get
\begin{equation}\label{E:estimate_Delta_equals_1}
\prod_{k=1}^N\nu(x,y_k)\leqslant 2^{5x+3}\exp\left(4x\sum_{k=1}^N \frac{1}{y_k}\right)=
\exp\left(\left(4\sum_{k=1}^N \frac{1}{y_k}+5\ln 2\right)x+3\ln2\right).
\end{equation}

Let us prove the Lemma for any $\Delta>0$.
Using~\eqref{E:estimate_Delta_equals_1} for the numbers $x/\Delta,y_1/\Delta,\ldots,y_N/\Delta$, we get the required inequality.
\end{proof}

Now we are ready to prove Lemma~\ref{L:estimate_coef_f}.

\begin{proof}[Proof of Lemma~\ref{L:estimate_coef_f}]
If $N=1$, then the required assertion is obvious.
Let $N\geqslant 2$.
Define $g(x)=\sum_{k=1}^N a_k x^{\alpha_k}$, $x\in[0,1]$.
Then
\begin{equation*}
\|g\|_{L_2[0,1]}\leqslant\sup_{x\in[0,1]}|g(x)|=\sup_{t\in[0,\infty)}|g(e^{-t})|=\sup_{t\in[0,\infty)}|f(t)|=\|f\|.
\end{equation*}
Consider any $k\in\{1,\ldots,N\}$.
We have
\begin{equation*}
|a_k|d_{L_2[0,1]}(x^{\alpha_k},\langle x^{\alpha_j}\mid j\neq k\rangle)\leqslant\|g\|_{L_2[0,1]}\leqslant\|f\|,
\end{equation*}
hence,
\begin{equation*}
|a_k|\leqslant\frac{\|f\|}{d_{L_2[0,1]}(x^{\alpha_k},\langle x^{\alpha_j}\mid j\neq k\rangle)}.
\end{equation*}
Using Lemma~\ref{L:distance}, we get
\begin{equation*}
|a_k|\leqslant\sqrt{2\alpha_k+1}\prod_{j\neq k}\frac{\alpha_k+\alpha_j+1}{|\alpha_k-\alpha_j|}\|f\|=
\sqrt{2\alpha_k+1}\prod_{j\neq k}\nu\left(\alpha_k+\frac{1}{2},\alpha_j+\frac{1}{2}\right)\|f\|.
\end{equation*}
From Lemma~\ref{L:estimate_product_nu} it follows that
\begin{align*}
&|a_k|\leqslant\sqrt{2\alpha_k+1}
\exp\left(\left(4\sum_{j\neq k}\frac{1}{\alpha_j+\frac{1}{2}}+\frac{5\ln 2}{\Delta}\right)\left(\alpha_k+\frac{1}{2}\right)+3\ln2\right)\leqslant\\
&\leqslant\exp(\alpha_k)
\exp\left(\left(4M+\frac{5\ln 2}{\Delta}\right)\left(\alpha_k+\frac{1}{2}\right)+3\ln2\right)=ae^{b\alpha_k}.
\end{align*}
This completes the proof.
\end{proof}

\section{Acknowledgements}
The author is grateful to the referee of the previous version of this work for helpful suggestions, and
to A. V. Abanin for providing some of his papers.


\begin{thebibliography}{99}

\bibitem{Abanin_95}
\textsl{A. V. Abanin}
Nontrivial expansions of zero and absolutely representing systems//
Mat. Zametki.---
1995.---
V.57, n.4.---
P.483-497;
English transl.:
Math. Notes.---
1995.---
V.57, n.4.---
P. 335-344.

\bibitem{Abanin_00}
\textsl{A. V. Abanin}
On decomposition of spaces into series of subspaces //
Act. Probl. Mat. Anal.---
Rostov-on-Don: Gingo, 2000,
P. 23-27. (in Russian)

\bibitem{Abanin_06}
\textsl{A. V. Abanin}
Inductive absolutely representing systems of subspaces //
Complex analysis. Operator theory. Mathematical modeling.---
Vladikavkaz, VSC RAS, 2006,
P. 27-34. (in Russian)

\bibitem{Banach}
\textsl{S. Banach}
Functional analysis.---
Radyanska shkola, Kiev, 1948. (in Ukrainian)

\bibitem{Borwein_Erdeliy}
\textsl{P. Borwein, T. Erd$\acute{e}$liy}
Polynomials and Polynomial Inequalities,
Springer-Verlag, New-York, 1995.

\bibitem{KorRSS}
\textsl{Yu. F. Korobeinik}
Representing systems of subspaces //
Mat. Zametki.---
1985.---
V.38, n.5.---
P.741-755.
English transl.:
Math. Notes.---
1985.---
V.38, n.5.---
P. 915-923.

\bibitem{Kor_ARF}
\textsl{Yu. F. Korobeinik}
Absolutely representing families //
Mat. Zametki.---
1987.---
V.42, n.5.---
P.670-680.
English transl.:
Math. Notes.---
1987.---
V.42, n.5.---
P. 869-874.

\bibitem{Korobeinik_09}
\textsl{Yu. F. Korobeinik}
The absolutely representing families in certain classes of locally convex spaces//
Izv. Vyssh. Uchebn. Zaved. Mat.---
2009.---
n.9.---
P. 25-35.
English transl.:
Russian Math. (Izv. VUZ. Mat.).---
2009.---
V.53, n.9.---
P.20-28.

\bibitem{Mihaylov}
\textsl{K. A. Mihaylov}
Absolutely representing systems of subspaces in spaces
of test ultradifferentiable functions //
Izv. Vyssh. Ucebn. Zaved. Severo.-Kavkaz. Reg. Estestv. Nauki---
2009.---
n.6.---
P. 8-11. (in Russian)

\bibitem{Shraifel_1}
\textsl{I.S. Shra{\u\i}fel$'$}
Absolutely representing systems in $\ell_2$ //
Izv. Vyssh. Ucebn. Zaved. Severo.-Kavkaz. Reg. Estestv. Nauki---
1993.---
n.3-4.---
P. 68-77. (in Russian)

\bibitem{Shraifel_2}
\textsl{I.S. Shra{\u\i}fel$'$}
On absolutely representing systems in Hilbert spaces //
Izv. Vyssh. Uchebn. Zaved. Mat.---
1995.---
n.9.---
P.78-82;
English transl.:
Russian Math. (Izv. VUZ. Mat.).---
1995.---
V.39, n.9.---
P.76-79.

\bibitem{Vershynyn_1}
\textsl{R. V. Vershinin}
On representative and absolutely representative systems in Banach spaces //
Mat. Fiz. Anal. Geom.---
1998.---
V.5, n.1/2.---
P.3-14. (in Russian)

\bibitem{Vershynin_2}
\textsl{R. Vershynin}
Absolutely representing systems, uniform smoothness, and type //
arXiv: math/ 9804044v1 [math.FA] 8 Apr 1998.

\end{thebibliography}
\end{document}